\theoremstyle{plain}        
\newtheorem{theorem}{Theorem}[section]
\newtheorem{lemma}[theorem]{Lemma}
\newtheorem{proposition}[theorem]{Proposition}
\newtheorem{corollary}[theorem]{Corollary}
\newtheorem{specialthm}{Theorem}        
\theoremstyle{definition}   
\newtheorem{definition}[theorem]{Definition}
\newtheorem{example}[theorem]{Example}
\newtheorem{remark}[theorem]{Remark}
\newtheorem{construction}[theorem]{Nullification Construction}  
\theoremstyle{remark}       
\newcommand{\Int}{\mathbb{Z}}   
\newcommand{\Real}{\mathbb{R}}  
\newcommand{\Q}{\mathbb{Q}}     
\newcommand{\sphere}{\mathbb{S}}
\newcommand{\field}{\mathbb{F}} 
\newcommand{\cc}{\mathsf{C}}            
\newcommand{\Derived}{\mathsf{D}}       
\newcommand{\cderived}{\mathsf{C}}      
\newcommand{\Aclass}{\mathfrak{a}}      
\newcommand{\Bclass}{\mathfrak{b}}      
\newcommand{\Cyc}{\mathcal{C}}          
\newcommand{\loc}[1]{{\langle #1 \rangle}}  
\newcommand{\Hom}{\mathrm{Hom}}         
\newcommand{\ext}{\mathrm{Ext}}         
\newcommand{\tor}{\mathrm{Tor}}         
\newcommand{\End}{\mathrm{End}}         
\newcommand{\cone}{\mathrm{Cone}}       
\newcommand{\chains}{\mathrm{C}}        
\newcommand{\cell}{\mathrm{Cell}}       
\newcommand{\Null}{\mathrm{Null}}       
\newcommand{\kangr}{\Omega}             
\newcommand{\B}{\mathrm{B}}             
\newcommand{\ring}{\mathcal{R}}         
\newcommand{\Sring}{\mathcal{S}}        
\newcommand{\Qring}{\mathcal{Q}}        
\newcommand{\ideal}{\mathcal{I}}        
\newcommand{\jdeal}{\mathcal{J}}        
\newcommand{\module}{\mathcal{M}}       
\newcommand{\Bmodule}{\mathcal{B}}      
\newcommand{\Cmodule}{\mathcal{C}}      
\newcommand{\Emodule}{\mathcal{E}}      
\newcommand{\Gmodule}{\mathcal{G}}      
\newcommand{\Nmodule}{\mathcal{N}}      
\newcommand{\Zmodule}{\mathcal{Z}}      
\newcommand{\ee}{\mathcal{E}}           
\newcommand{\nn}{\mathcal{N}}           
\newcommand{\Xcomplex}{\mathcal{X}}     
\newcommand{\Ycomplex}{\mathcal{Y}}     
\newcommand{\Ccomplex}{\mathcal{C}}     
\newcommand{\trans}{\Sigma}         
\newcommand{\shift}{\mathfrak{s}}	
\newcommand{\Sshift}{\mathrm{S}}	
\newcommand{\ing}[1]{\langle #1 \rangle}
\newcommand{\tc}{\mathcal{T}}   
\newcommand{\ff}{\mathcal{F}}   
\newcommand{\aideal}{\mathfrak{a}}
\newcommand{\Espec}{\mathbf{E}}         
\newcommand{\Dspec}{\mathbf{D}}         
\newcommand{\Endring}{\mathbb{E}}       
\newcommand{\discard}[1]{}
\begin{document}

\title{A colocalization spectral sequence}
\author{S. Shamir}
\address{Department of Mathematics, University of Bergen, 5008 Bergen, Norway}
\email{Shoham.Shamir@math.uib.no}
\date{\today}

\begin{abstract}
Colocalization is a right adjoint to the inclusion of a subcategory. Given an $\sphere$-algebra $R$, one would like a spectral sequence which relates colocalization in the homotopy category of $R$-modules to an appropriate colocalization in the derived category of $\pi_*R$-modules. We show that, under suitable conditions, such a spectral sequence exists. This generalizes Greenlees' local-cohomology spectral sequence. The colocalization spectral sequence introduced here is associated with a localization spectral sequence, which is shown to be universal in an appropriate sense. We apply the colocalization spectral sequence to the cochains of certain loop spaces, yielding a non-commutative local-cohomology spectral sequence converging to the shifted cohomology of the loop space, a result dual to the local-cohomology theorem of~\cite{DwyerGreenleesIyengar}. An application to the abutment term of the Eilenberg-Moore spectral sequence is also presented.
\end{abstract}


\maketitle


\section{Introduction}
\label{sec: Introduction}

Let $\Derived$ be a category and let $\cderived$ be a subcategory of $\Derived$. A \emph{$\cderived$-colocalization} of $X\in \Derived$ is a morphism $\eta:C \to X$ in $\Derived$ with $C\in \cderived$ and such that $\hom_\Derived(T,\eta)$ is an isomorphism for every $T\in \cderived$. For historical reasons we will call $C$ a \emph{$\cderived$-cellular approximation} of $X$ and denote it by $\cell_\cderived^\Derived X$.

Now consider the following setup. Let $R$ be an $\sphere$-algebra (or a dga). We denote by $\pi_*R$ the graded algebra of stable homotopy groups of $R$, if $R$ is a dga then these are understood to be the homology groups of $R$. Let $\tc$ be a class of graded left $\pi_*R$-modules which is closed under submodules, quotient modules, coproducts, extensions and degree shifts. Such a class is called a \emph{hereditary torsion class} (see Definition~\ref{def: Hereditary torsion theory}). Let $\Derived$ be the derived category of left $R$-modules, also called the homotopy category of left $R$-modules, and let $\cderived$ be the subcategory of $R$-modules $M$ such that $\pi_*(M)\in \tc$. Denote by $\Derived_*$ the derived category of bounded above $\pi_*R$-complexes and let $\cderived_*$ be the subcategory of $\Derived_*$ consisting of all complexes $\Xcomplex$ such that $H^n(\Xcomplex) \in \tc$ for all $n$. We shall use the notation $\cell^R_\tc$ for $\cell_\cderived^\Derived$ and use $\cell_{\tc}^{\pi_*R}$ for $\cell_{\cderived_*}^{\Derived_*}$. A \emph{$\tc$-colocalization spectral sequence} for an $R$-module $M$ is a spectral sequence of the form:
\begin{equation*}
    \Espec^2_{p,q}= H^{-p}(\cell_{\tc}^{\pi_*R} \pi_*M)_q \ \Rightarrow \ \pi_{p+q}(\cell_\tc^R M)
\end{equation*}
with conditional convergence.

The Greenlees spectral sequence defined in~\cite{GreenleesCommutativeAlgebraInGroupCohomology} (see also~\cite{BensonGreenleesCommAlgLieGrps}) can be viewed as a colocalization spectral sequence, as was shown by Dwyer, Greenlees and Iyengar in~\cite{DwyerGreenleesIyengar}. The setup for this spectral sequence is the following. Let $G$ be a compact Lie group, for simplicity assume that $G$ is connected, and let $k$ be a field. Denote by $R$ the dga $\chains^*(\B G;k)$, i.e. singular cochains on the classifying space of $G$. Then $\pi_*(R)$ is the graded-commutative ring $H^*(\B G;k)$. Let $\ideal$ be the augmentation ideal $\ker(H^*(\B G;k) \to k)$. An $H^*(\B G;k)$-module $\module$ is \emph{$\ideal$-power torsion} if for every $m\in \module$, $\ideal^n m=0$ for some $n$. Denote the class of $\ideal$-power torsion modules by $\tc$. The Greenlees spectral sequence can be written as
\[ (*) \qquad \Espec^2_{p,q}= H^{-p}(\cell_{\tc}^{H^*(\B G;k)} H^*(\B G;k))_q \ \Rightarrow \ \pi_{p+q}(\cell_\tc^R R)\]
Using~\cite{DwyerGreenleesIyengar} and~\cite{DwyerGreenlees} it is easy to see how the usual form for the Greenlees spectral sequence:
\[(**) \qquad \Espec^2_{p,q}= H_\ideal^{-p}(H^*(\B G;k))_q \ \Rightarrow \ H_{p+q-d}(\B G;k)\]
follows from $(*)$. One of the applications for the spectral sequence presented in this paper is an analogous result to $(**)$ for certain loop-spaces, see Theorem~\ref{the: Application to the chains of loops of elliptic}.

Similar to $(*)$ above is a colocalization spectral sequence presented by Greenlees and May in~\cite{GreenleesMay}. In that spectral sequence, $R$ is a commutative $\sphere$-algebra, $\ideal$ is a finitely generated ideal of $\pi_*R$ and $\tc$ is the class of $\ideal$-power torsion $\pi_*R$-modules. Another instance of a spectral sequence of similar flavour is~\cite{HoveyStricklandLocalCohoBP}, though the setting there is quite different than the one considered here.

The existing methods of constructing such colocalization spectral sequences may fail in non-commutative situations. For example, the construction of the Greenlees spectral sequence requires $R$ to be a commutative $\sphere$-algebra. While the methods of~\cite{HoveyStricklandLocalCohoBP} do not necessitate the use of a commutative $\sphere$-algebra, these may also fail in non-commutative situations, as we now explain. The method of~\cite{HoveyStricklandLocalCohoBP} can be roughly described as applying a certain localization functor to a filtration which is the lift of an injective resolution (the relevance of localization functors will be explained further on). For this method to succeed, it requires that the corresponding localization functor on $\pi_*(R)$-complexes be exact on injective $\pi_*(R)$-modules. But this need not be true in non-commutative situations; see~\cite[Example 5.2]{ShamirEMSS} for a simple example of such a case. The aim of this paper is to rectify these shortcomings by constructing a colocalization spectral sequence for cases where $R$ is a non-commutative $\sphere$-algebra. The actual construction is carried out in Section~\ref{sec: Construction of the spectral sequences}, while the convergence issues are dealt with in the main theorems below. As we shall see, the spectral sequence constructed here in fact generalizes the existing ones. It should also be noted that many of the results of this paper hold in any triangulated category with a compact generator, see Remark~\ref{rem: Generality of the spectral sequence}.


\begin{specialthm}
\label{thm: First Theorem}
Let $R$ be an $\sphere$-algebra and let $\tc$ be a hereditary torsion class of $\pi_*R$-modules. For any $R$-module $M$ there is a spectral sequence $\Espec^r_{p,q}$ whose $\Espec^2$-term is
\[\Espec^2_{p,q}= H^{-p}(\cell_{\tc}^{\pi_*R} \pi_*M)_q\]
If the $\pi_*R$-chain complex $\cell_\tc^{\pi_*R} \pi_*M$ is bounded below (i.e. has zero homology below a certain degree), then the spectral sequence converges strongly to $\pi_{p+q}(\cell_\tc^R M)$.
\end{specialthm}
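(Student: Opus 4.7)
The plan is to construct an Adams-type increasing tower of $R$-modules mapping to $M$ whose successive cofibers realize, on $\pi_*$, a $\cderived_*$-cellular resolution of $\pi_*M$, to extract the spectral sequence from its exact couple, and to identify its abutment with $\pi_*(\cell_\tc^R M)$ under the bounded-below hypothesis.

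First I would fix a set $\gen$ of generators for $\tc$ and build a tower $\ast = M_{-1} \to M_0 \to M_1 \to \cdots$ of $R$-modules equipped with compatible maps $M_n \to M$, by induction on $n$. At each stage, $M_n$ is obtained from $M_{n-1}$ by attaching a coproduct of suspensions of lifts of elements of $\gen$, chosen so that the sequence of cofibers $F_n = M_n/M_{n-1}$ together with its induced differentials realizes, on $\pi_*$, a $\cderived_*$-cellular resolution of $\pi_*M$ in $\Derived_*$. Each $M_n$ lies in $\cderived$ by induction since $\tc$ is closed under extensions, and by the closure of hereditary torsion classes under filtered colimits, so does $\hocolim_n M_n$. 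The exact couple of this tower produces a spectral sequence with $E^1_{p,q} = \pi_{p+q}F_p$, and by the construction underlying Lemma~\ref{lem: Spectral sequence for nullification}, the $E^2$-page is $H_{p,q}(\cell_\tc^{\pi_*R}\pi_*M)$.

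To identify the abutment, I would show $\hocolim_n M_n \simeq \cell_\tc^R M$. The canonical map $\hocolim_n M_n \to M$ has source in $\cderived$ by the preceding remark, and satisfies the universal property $[T, \hocolim_n M_n] \xrightarrow{\sim} [T,M]$ for every $T \in \cderived$ by a standard cofinality argument: the resolution was chosen precisely to exhaust obstructions coming from $\gen$. The bounded-below hypothesis on $\cell_\tc^{\pi_*R}\pi_*M$ then implies that the cellular resolution, and hence each internal grading of the cofibers $F_p$, vanishes for $p$ sufficiently negative, so the tower stabilizes in each total degree. This kills the $\lim^1$ term and upgrades the conditional convergence of the spectral sequence to strong convergence toward $\pi_{p+q}(\hocolim_n M_n) = \pi_{p+q}(\cell_\tc^R M)$.

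The principal obstacle is the strong convergence step. The construction of the tower and the $E^2$-identification are essentially formal, and the identification of the hocolim with the cellular approximation requires only cofinality. The delicate point is that without the bounded-below hypothesis the filtration on $\pi_*(\cell_\tc^R M)$ may fail to be degreewise finite, leaving only conditional convergence and a possibly nontrivial $\lim^1$ obstruction; the hypothesis is exactly what is needed to rule this out and reduce convergence to a standard finiteness argument.
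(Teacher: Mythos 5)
There is a genuine gap at the foundation of your construction: the tower itself. You propose to build $M_n$ from $M_{n-1}$ by ``attaching a coproduct of suspensions of lifts of elements of $\gen$,'' where $\gen$ generates $\tc$. But the elements of $\gen$ are $\tc$-torsion $\pi_*R$-modules, and there is no reason such a module lifts to an $R$-module: Brown representability realizes \emph{injective} $\pi_*R$-modules (via the cohomological functor $X\mapsto \Hom_{\pi_*R}(\pi_*X,\ee)$), and free modules are realizable, but an arbitrary torsion module is neither. Worse, even if some tower existed, your $\Espec^2$-identification requires the layers to realize, on $\pi_*$, the specific complex $\cell_\tc^{\pi_*R}\pi_*M$; in this paper that complex is $\pi_*M\to\nn^0\to\nn^1\to\cdots$ from the Nullification Construction, built out of injective hulls of torsion-free quotients --- a coinduction-type object that an increasing cell-attachment tower does not produce. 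This is exactly why the paper works dually: it lifts each injective $\nn^p$ to an $R$-module $N^p$ by Brown representability, assembles a \emph{decreasing} tower $E^0\leftarrow E^1\leftarrow\cdots$ realizing the nullification with $\holim_p E^p\simeq 0$, sets $C$ to be the fiber of $M\to E^0$, and reads off the exact couple (Lemmas~\ref{lem: Spectral sequence for nullification} and~\ref{lem: Spectral sequence for cellularization}). Your appeal to ``the construction underlying Lemma~\ref{lem: Spectral sequence for nullification}'' is therefore not available to you, since that construction is the injective one you have replaced.

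Two further steps are underjustified even granting a tower. First, the ``standard cofinality argument'' that $[T,\hocolim_n M_n]\to[T,M]$ is an isomorphism for every $T\in\cderived$ does not go through: $T$ ranges over all $R$-modules with torsion homotopy, not compact objects, so $\hom$ does not commute with the homotopy colimit, and killing obstructions from $\gen$ degreewise does not control maps from general $T$. The correct route is to show that the cofiber of $\hocolim_n M_n\to M$ is $\tc$-null and that $\pi_*(\hocolim_n M_n)\in\tc$, and then invoke Proposition~\ref{pro: Equivalent conditions for Cell and Null}. Second, note that in the paper the bounded-below hypothesis is needed not only for convergence but also to identify the abutment: one must prove $\pi_*C\in\tc$, which is done by observing that the (then finite) filtration of $\pi_*C$ has $\tc$-torsion subquotients and $\tc$ is closed under subquotients and extensions. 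Your proposal treats the identification of the abutment as independent of the hypothesis, which inverts the actual logical structure of the theorem.
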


Of course, for Theorem~\ref{thm: First Theorem} to be useful, one must find cases for which the condition on the spectral sequence is satisfied. Let $R$ be an $\sphere$-algebra and let $\ideal$ be a two-sided ideal of $\pi_*R$. We say $\ideal$ is \emph{almost-commutative} if $\ideal=(x_1,...,x_n)$, where $x_1,..,x_n$ are homogeneous elements,  and the image of $x_i$ in $\pi_*R/(x_1,...,x_{i-1})$ is central. The next result shows that very mild conditions are needed for a colocalization spectral sequence to converge strongly.

\begin{specialthm}
\label{thm: Second Theorem}
Let $R$ be an $\sphere$-algebra and let $\ideal$ be an almost-commutative ideal of $\pi_*R$. Suppose that $\pi_*R$ is Noetherian or that $\pi_*R$ is graded-commutative. Let $\tc$ be the class of $\ideal$-power torsion modules. Then for any $R$-module $M$ there exists a strongly convergent $\tc$-colocalization spectral sequence:
\[ \Espec^2_{p,q}= H^{-p}(\cell_{\tc}^{R_*} \pi_*(M))_q \ \Rightarrow \ \pi_{p+q}(\cell_\tc^R M)\]
\end{specialthm}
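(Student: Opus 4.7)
By Theorem~\ref{thm: First Theorem} the spectral sequence exists for any hereditary torsion class, and strong convergence follows once $\cell_\tc^{\pi_*R}\pi_*M$ is known to be bounded below. The core task is therefore to exhibit a bounded model of the $\tc$-cellular approximation, and the natural candidate is an iterated stable-Koszul (fibre-of-localization) construction along the generators $x_1,\dots,x_n$ of $\ideal$.

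First I would treat the base case of a single central element $x\in\pi_*R$. Because $x$ is central, the Ore localization $(\pi_*M)[x^{-1}]$ exists as a $\pi_*R$-module, and the fibre
\[ \Gamma_x(\pi_*M) := \mathrm{fib}\bigl(\pi_*M \to (\pi_*M)[x^{-1}]\bigr) \]
realises the $\tc_x$-cellular approximation: its two nonzero homology groups (the $x$-torsion submodule and the cokernel of localization) are $x$-power torsion, while $R\Hom_{\pi_*R}(T,(\pi_*M)[x^{-1}])=0$ for every $x$-power torsion $T$, so the natural map $\Gamma_x(\pi_*M)\to\pi_*M$ has the required universal property. This model occupies only two consecutive homological degrees.

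For the general case I would induct on the number of generators. Write $\ideal_i=(x_1,\dots,x_i)$, set $C_0=\pi_*M$ and $C_i=\Gamma_{x_i}C_{i-1}$. Inductively, $C_{i-1}$ has $\ideal_{i-1}$-power torsion homology, so the action of $x_i$ on $C_{i-1}$ factors through $\pi_*R/\ideal_{i-1}$, where by hypothesis $x_i$ is central; this is what the almost-commutative assumption buys us. One then verifies by induction on $i$ that (a) $C_i$ has $\ideal_i$-power torsion homology and (b) the composite $C_i \to \pi_*M$ is a $\tc_{\ideal_i}$-cellular equivalence, using either the Noetherian or the graded-commutative hypothesis to identify $\tc_\ideal$ with the intersection $\bigcap_j \tc_{x_j}$ so that cellularity can be checked one generator at a time. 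Each stage adds at most one homological degree, so $C_n$ is concentrated in at most $n{+}1$ consecutive degrees and is in particular bounded below.

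The main technical obstacle is step (b) in the non-commutative Noetherian setting: one must show that iterated fibres really compute the $\tc_\ideal$-cellularization even though $x_{i+1}$ is only central modulo $\ideal_i$. The almost-commutativity hypothesis is designed precisely for this difficulty: after stage $i$ the homology is supported over the central quotient $\pi_*R/\ideal_i$, so the next stage reduces to the fully central case established earlier. In the graded-commutative case the same iteration goes through without any auxiliary quotient. Once boundedness below of $\cell_\tc^{\pi_*R}\pi_*M$ is established, Theorem~\ref{thm: First Theorem} gives the strong convergence claimed.
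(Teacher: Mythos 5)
Your overall strategy matches the paper's: reduce to Theorem~\ref{thm: First Theorem} and prove that $\cell_\tc^{\pi_*R}\pi_*M$ is bounded below by induction on an almost-central generating sequence, with the one-generator case handled by the fibre of $\pi_*M \to (\pi_*M)[1/x]$ exactly as in Lemma~\ref{lem: Cellularization of a central element}. (In the graded-commutative case the paper simply cites Dwyer and Greenlees, which is the stable Koszul model you describe.)

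In the Noetherian non-commutative case, however, your inductive step has a genuine gap. You assert that the action of $x_i$ on $C_{i-1}$ factors through $\pi_*R/\ideal_{i-1}$ because the homology of $C_{i-1}$ is $\ideal_{i-1}$-power torsion. This is false: a power-torsion module is not annihilated by $\ideal_{i-1}$, so it is not a module over the quotient ring, and $x_i$ is central only in that quotient, not modulo powers of $\ideal_{i-1}$. Consequently left multiplication by $x_i$ is not a map of $\pi_*R$-complexes on $C_{i-1}$, the telescope defining $C_{i-1}[x_i^{-1}]$ does not exist as written, and $\Gamma_{x_i}C_{i-1}$ is undefined. The repair is exactly what occupies most of Appendix~\ref{app: Colocalization in graded rings}: filter a power-torsion module by the annihilators of successive powers so that the subquotients are honest modules over the quotient ring, where the next generator really is central; use Lemma~\ref{lem: Cell commutes with hocolim of telescopes} to pass cellular approximation through the resulting homotopy colimits (this is one place the Noetherian hypothesis enters, via direct sums of injectives being injective); and use Lemma~\ref{lem: Independence of base for almost central} to compare colocalization over $\pi_*R$ with colocalization over $\pi_*R/\ideal_j$. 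Your identification $\tc_\ideal=\bigcap_j\tc_{x_j}$ is likewise unjustified without commutativity (one cannot collect the generators inside a product of elements of $\ideal$) and is not needed: the paper instead uses $\loc{\tc}=\loc{\pi_*R/\ideal}$ and proves only the vanishing $H^i\cell_{\pi_*R/\ideal}\module=0$ for $i>n$ (Lemma~\ref{lem: Almost central ideal has bounded local cohomology degrees}), which is all that Theorem~\ref{thm: First Theorem} requires, rather than an explicit $(n{+}1)$-term iterated Koszul model.
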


The next result, Theorem~\ref{thm: Third Theorem}, is similar in flavour to Theorem~\ref{thm: First Theorem}. It should be noted that various other conditions under which Theorem~\ref{thm: Third Theorem} holds can be easily produced, and so it serves as an example of the versatility of the theory. Recall that for a ring $k$ the notation $Hk$ stands for the corresponding Eilenberg-Mac Lane $\sphere$-algebra. For an $R$-module $M$ we use $\loc{M}$ to denote the localizing subcategory generated by $M$.

\begin{specialthm}
\label{thm: Third Theorem}
Let $k$ be a field and let $R$ be a connective $Hk$-algebra with augmentation $R \to Hk$. Suppose that $\pi_*R$ is left Noetherian. Let $\ideal$ be the kernel of the surjection $\pi_*R \to k$ induced by the augmentation and let $\tc$ be the hereditary torsion class of $\ideal$-power torsion modules. If $M$ is a bounded-above $R$-module, then there is a strongly convergent $\tc$-colocalization spectral sequence for $M$:
\[ \Espec^2_{p,q}= H^{-p}(\cell_{\tc}^{R_*} \pi_*(M))_q \ \Rightarrow \ \pi_{p+q}(\cell_\tc^R M)\]
Moreover, $\cell_\tc^R M\simeq \cell_\loc{Hk}^R M$ and $\cell_\tc^{\pi_*R}\pi_*M \simeq \cell_\loc{k}^{\pi_*R}\pi_*M$. Hence the spectral sequence can be written as:
\[ \Espec^2_{p,q}= H^{-p}(\cell_{\loc{k}}^{R_*} \pi_*(M))_q \ \Rightarrow \ \pi_{p+q}(\cell_\loc{Hk}^R M)\]
\end{specialthm}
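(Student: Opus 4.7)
The approach is to reduce to Theorem~\ref{thm: First Theorem}: once I check its boundedness hypothesis, I obtain both the spectral sequence and its strong convergence, leaving only the identification of the $\tc$-cellular subcategories with $\loc{Hk}$ and $\loc{k}$. Since $R$ is connective and $M$ is bounded above, $\pi_*M$ is a bounded-above complex of $\pi_*R$-modules and so lies in $\Derived_*$; hence the spectral sequence of Theorem~\ref{thm: First Theorem} already has the stated $\Espec^2$-term, and only the bounded-below condition on $\cell_\tc^{\pi_*R}\pi_*M$ remains to be verified.

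For that bounded-below claim I would exploit the Noetherian hypothesis together with the fact that $\pi_*R/\ideal = k$ is a field. Noetherianness supplies a finite generating set $x_1,\ldots,x_n$ for $\ideal$, and the simplicity of $k$ forces every $\ideal$-power torsion module to admit a finite filtration whose subquotients are $k$-vector spaces; in particular $k$ is the unique simple $\tc$-module. The plan is to construct $\cell_\tc^{\pi_*R}\pi_*M$ by an iterated stable-Koszul tower on $x_1,\ldots,x_n$, in the spirit of Theorem~\ref{thm: Second Theorem}. Such a model is concentrated in a bounded range of homological degrees relative to $\pi_*M$, and so is bounded below as a chain complex --- verifying the convergence hypothesis of Theorem~\ref{thm: First Theorem}.

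To identify the cellularizations, let $\cderived \subseteq \Derived(R)$ be the subcategory of $N$ with $\pi_*N \in \tc$. Then $Hk \in \cderived$, so $\loc{Hk} \subseteq \cderived$. Conversely, for any $N \in \cderived$ with $\pi_*N$ bounded above, the filtration of $\pi_*N$ by the ascending submodules annihilated by successive powers of $\ideal$ realizes $N$ as a homotopy colimit of $R$-modules whose homotopy is a $k$-vector space, each of which lies in $\loc{Hk}$; thus $N \in \loc{Hk}$. Applied to $M$ this yields $\cell_\tc^R M \simeq \cell_{\loc{Hk}}^R M$, and the analogous underived argument over $\pi_*R$ gives $\cell_\tc^{\pi_*R}\pi_*M \simeq \cell_{\loc{k}}^{\pi_*R}\pi_*M$.

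\emph{Main obstacle.} The principal difficulty is the bounded-below claim. Unlike Theorem~\ref{thm: Second Theorem}, no almost-commutativity of $\ideal$ is assumed, so the iterated stable-Koszul tower cannot be transplanted verbatim. The argument must either exploit the augmentation $\pi_*R \to k$ to verify that the generators of $\ideal$ act, modulo successive subideals, in a manner compatible with the Koszul construction, or else replace that construction entirely by a bar-type resolution for $R \to Hk$ and bound its homotopy length via Noetherianness of $\pi_*R$. A subsidiary technical point is ensuring in Step~3 that the $\ideal$-power filtration of $\pi_*N$ produces a genuine cell presentation of $N$ convergent in $\Derived(R)$; this is where boundedness of $M$ is used.
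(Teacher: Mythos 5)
The central step of your plan --- verifying that $\cell_\tc^{\pi_*R}\pi_*M$ is bounded below in homological degree so that Theorem~\ref{thm: First Theorem} applies --- is the wrong target, and you have in effect conceded this by listing it as the main obstacle. That bounded-below condition is precisely what the almost-commutativity (or graded-commutativity) hypotheses of Theorem~\ref{thm: Second Theorem} purchase, via Lemma~\ref{lem: Almost central ideal has bounded local cohomology degrees}; in Theorem~\ref{thm: Third Theorem} no such hypothesis is available, and there is no reason for $\cell_\tc^{\pi_*R}\pi_*M$ to have homology in only finitely many homological degrees. Neither an iterated stable-Koszul tower (which needs the generators of $\ideal$ to be successively central) nor a bar resolution of $\pi_*R\to k$ (which is typically unbounded) will produce such a bound. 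The convergence mechanism that actually works is orthogonal: one bounds the \emph{internal} grading rather than the homological degree. Because $R$ is a connective $Hk$-algebra with $k$ a field, $\pi_*R$ is a connective graded $k$-algebra, and the injective hull of a module with grading bound $q_0$ again has grading bound $q_0$ (embed it into a product of shifts of $\Hom_k(\pi_*R,k)$; Lemma~\ref{lem: Injective hull has bounded grading}). Running the Nullification Construction~\ref{con: Nullification Construction} on $\pi_*M$, which has a grading bound $q_0$ since $M$ is bounded above, one finds that every $H_p(\cell_\tc^{\pi_*R}\pi_*M)$ has grading bound $q_0$ (Lemma~\ref{lem: Bounded grading for Third Theorem}). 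Hence $\Espec^2_{p,q}=0$ for $q>q_0$; as the spectral sequence is concentrated in $p\le 0$, each total degree meets only finitely many nonzero entries and Boardman's criterion gives strong convergence. This is exactly where the $Hk$-algebra and field hypotheses enter, and your plan never uses them in the convergence argument.

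A second, smaller gap is in the identification $\cell_\tc^R M\simeq\cell_{\loc{Hk}}^R M$: the ascending filtration of $\pi_*N$ by the annihilators of powers of $\ideal$ is a filtration of a graded module, and it does not automatically lift to a cell presentation of $N$ in $\Derived(R)$. The argument that closes this is degreewise: the spectral sequence (with its grading bound) shows each $\pi_nC$ has a finite filtration with $\tc$-torsion subquotients, hence is $\tc$-torsion and therefore $k$-cellular over $\pi_*R$ by Lemma~\ref{lem: Cellularization at R/I is I-power torsion when}; one then transports cellularity along $\pi_0R\to\pi_*R$ and $R\to H\pi_0R$, and invokes Lemma~\ref{lem: Bounded above module spectrum is cellular pi_0-cellular} --- noting that $C$ is bounded above, again because of the grading bound --- to conclude $C\in\loc{Hk}$.
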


Associated to every colocalization in a derived category is its counterpart, localization. In our setup, where $\tc$ is a hereditary torsion class of $\pi_*R$-modules, this localization functor comes under the guise of \emph{$\tc$-nullification}. An $R$-module is $\tc$-null if it admits no nontrivial morphisms from any $\tc$-torsion module. A \emph{$\tc$-nullification} of an $R$-module $M$ is a morphism $M \to N$ which is initial among morphisms from $M$ to $\tc$-null modules, see Definition~\ref{def: Nullification}. It is now clear what a $\tc$-nullification spectral sequence should mean.

The colocalization spectral sequence constructed in this paper is in fact induced from a particular nullification spectral sequence constructed in Lemma~\ref{lem: Spectral sequence for nullification}. Hence, to each of the spectral sequences in Theorems~\ref{thm: First Theorem}, \ref{thm: Second Theorem} and~\ref{thm: Third Theorem} there is an associated nullification spectral sequence with similar convergence properties. One reason why the colocalization spectral sequence is relegated a more prominent role is that the convergence conditions are more visible there. However, the nullification spectral sequence deserves to be mentioned for its unique property of being universal in an appropriate sense. Its universality means that any \emph{proper $\tc$-nullification spectral sequence} (see Definition~\ref{def: Proper nullification spectral sequence}) is isomorphic to it. This is the essence of our fourth main result, Theorem~\ref{thm: Universality for nullification}.

One source of examples for this theory is the chains of loop spaces. Let $k$ be a commutative ring and consider the dga $R=\chains_*(\kangr X;k)$ -- the singular chains on the Kan loop group of a connected space $X$. Under certain conditions Theorem~\ref{thm: Second Theorem} can be applied to the derived category of $R$, thereby achieving a result analogous to the Greenlees spectral sequence described in $(**)$. This will be done in Theorem~\ref{the: Application to the chains of loops of elliptic}.

Using a similar set-up, we can sometimes apply Theorem~\ref{thm: Third Theorem} to measure the difference between the abutment term of the Eilenberg-Moore spectral sequence and the cohomology of the fibre in terms of a nullification spectral sequence. This is done in Theorem~\ref{the: Another formulation of Application to rational EMSS}.

Another application of the colocalization spectral sequence presented here is in a recent preprint by J{\o}rgensen~\cite{Jorgensen}, where it is used to develop a duality between left and right modules over a dga.

\subsection*{Organization of this paper}

Section~\ref{sec: Modules over a graded ring} describes the construction of the derived category of a graded ring. The section contains no new material and can be ignored by anyone familiar with this subject.

We then present some background material on cellular approximations in Section~\ref{sec: Cellular approximation and nullification}. All of this material is well known in one form or another. Next, in Sections~\ref{sec: Colocalization with respect to a hereditary torsion theory} and~\ref{sec: Explicit construction of tc-nullification}, we consider colocaliztion over a graded ring $\ring$ with respect to a hereditary torsion class on $\ring$-modules and recall material from~\cite{ShamirTorsion}. The main result of~\cite{ShamirTorsion} is recalled in Theorem~\ref{the: Nullification as double endomorphism}. It is this theorem that enables the construction of a colocalization spectral sequence.

Once we have all the necessary background material we can construct the spectral sequences. This is done in Section~\ref{sec: Construction of the spectral sequences}. As noted above, the construction of a spectral sequence with the desired $\Espec^2$-page and showing the spectral sequence converges are separate matters. Thus, the proofs of Theorems~\ref{thm: First Theorem}, \ref{thm: Second Theorem} and~\ref{thm: Third Theorem} are given in the next section, Section~\ref{sec: Proofs}. Following that, the universality property of the nullification spectral sequence and a comparison to the Greenlees spectral sequence are given in Section~\ref{sec: Universality of the Nullfication Spectral Sequence}.

After proving the main theorems we give the two applications of the colocalization spectral sequence. The first application, given in Section~\ref{sec: Application: the chains of loops on an elliptic space}, can be viewed as a non-commutative analog of the Greenlees spectral sequence $(**)$. 
The second application is presented in Section~\ref{sec: Application: the target of the Eilenberg-Moore spectral sequence} and is related to the abutment term of the Eilenberg-Moore spectral sequence. 

\subsection*{Setting, conventions and notation}

We work with $\sphere$-algebras in the sense of~\cite{EKMM}, where $\sphere$ stands for the sphere spectrum. Let $R$ be an $\sphere$-algebra, unless otherwise noted an $R$-module means a \emph{left} $R$-module. The homotopy category of $R$-modules will be called the \emph{derived category of $R$} and be denoted by $\Derived(R)$. The \emph{homotopy groups} of an $R$-module $M$ are its stable homotopy groups, denoted $\pi_* M$. An $R$-module $M$ is called \emph{bounded-above} if $\pi_i M=0$ for all $i>n$ for some $n$. An $\sphere$-algebra $R$ is called \emph{connective} if $\pi_i(R)=0$ for all $i<0$. Note that if $R$ is a dga and $M$ an dg-$R$-module then the homotopy groups of $M$ are the homology groups of $M$.

For $\sphere$-algebras and modules over $\sphere$-algebras we follow~\cite{DwyerGreenleesIyengar} in both notation and terminology. Thus, for an $\sphere$-algebra $R$ and for $R$-modules $M$ and $N$ the notation $M\otimes_R N$ stands for the smash product of $M$ and $N$ over $R$: $M \wedge_R N$. Similarly, $\Hom_R(M,N)$ stands for the function spectrum $F_R(M,N)$. As in~\cite{DwyerGreenleesIyengar}, both the function spectrum and the smash product are taken in the derived sense. This means that we always assume to have replaced our modules by appropriate resolutions before applying the relevant functor.

Let $X$ be a connected space and let $k$ be a commutative $\sphere$-algebra. We use $\kangr X$ to denote the Kan loop group of $X$, which is equivalent to the loop space of $X$. The cochains of $X$ with coefficients in $k$ is the commutative $\sphere$-algebra $\chains^*(X;k)=\Hom_\sphere(\Sigma^\infty X_+,k)$. The chains of $\kangr X$ with coefficients in $X$ is the $\sphere$-algebra $\chains_*(\kangr X;k)=k\otimes_\sphere \Sigma^\infty (\kangr X)_+$.

A \emph{graded ring} means a $\Int$-graded ring. A module over a graded ring is similarly $\Int$-graded. We shall reserve calligraphic font for graded rings their modules. Let $\ring$ be a graded ring. As above an $\ring$-module will mean a \emph{left} $\ring$-module. An \emph{$\ring$-complex} is a chain complex of left $\ring$-modules. Hence an $\ring$-complex is in fact bigraded. To avoid confusion we denote the grading of an $\ring$-module $\module$ by $\module\ing{i}$. If $m \in \module$ then we denote the degree of $m$ by $|m|$.

We say that $\ring$ is \emph{connective} if $\ring \ing{i}=0$ for all $i<0$. Ideals of $\ring$ are always graded (homogeneous) ideals. Given an $\ring$-module $\module$, we denote by $\shift^n\module$ the twisted grading shift of $\module$, see Definition~\ref{def: definition of the twisted grading shift functor}. We say an $\ring$-module $\module$ has \emph{bounded grading} if there is some $b$ such that $\module\ing{i}=0$ for all $i>b$. We shall call such $b$ a \emph{grading bound} of $\module$.

Since we follow the conventions of algebraic topology, we say a graded ring $\ring$ is \emph{graded-commutative} if $xy=(-1)^{|x|\cdot|y|}yx$ for all $x,y, \in \ring$. We say that an element $x\in\ring$ is \emph{central} if $x$ is central in the graded sense, i.e. $xy=(-1)^{\deg x \cdot \deg y} yx$ for all $y\in \ring$. Note that one could replace these with the usual notions of commutativity and centrality without affecting the results in this paper. 

It is taken for granted that every $\ring$-module is also a $\ring$-complex concentrated in degree zero. For $\ring$-complexes $\Xcomplex$ and $\Ycomplex$ we use $\Hom_\ring(\Xcomplex,\Ycomplex)$ for the usual chain complex of homomorphisms, albeit in this case the homomorphism groups are graded (see Definition~\ref{def: definition of graded Hom over a graded ring}). We will automatically assume $\Hom_\ring(-,-)$ is derived, as is the case for $\sphere$-algebras. We shall denote the suspension (translation) functor on $\Derived(\ring)$ by $\trans$. Hence, for an $\ring$-module $\module$ the notation $\trans\module$ stands for the $\ring$-complex with $\module$ in degree $1$ and zero elsewhere.

In a triangulated category $\Derived$ we use the notation $\hom_\Derived^*(-,-)$ for the graded abelian group of homomorphisms between two objects of $\Derived$. As always, this grading is induced the suspension functor $\Sigma$. We use the convention that when $\Derived$ is the derived category of a graded ring then $\hom_\Derived^*(-,-)$ is bigraded. The second grading comes of course from the twisted grading shift functor $\shift$. Thus $\hom_\Derived^{0,n}(\Xcomplex,\Ycomplex)=\hom_\Derived(\Xcomplex,\shift^n\Ycomplex)$. A \emph{triangle} in $\Derived$ will always mean a distinguished (exact) triangle. Given a map $f:X \to Y$ in $\Derived$ we denote by $\cone(f)$ any object which completes $X \xrightarrow{f} Y$ to a triangle.

All spectral sequences presented in this paper are of homological grading. This means that the differential on the $\Espec^r$ term is $d_r:\Espec^r_{p,q} \to \Espec^r_{p-r,q+r-1}$.

\subsection*{Acknowledgements}

I am indebted to J.P.C. Greenlees for many useful conversations and very helpful comments.

\section{Modules over a graded ring}
\label{sec: Modules over a graded ring}

The goals of this section are to recall the construction of the derived category of modules over a graded ring and to assure the reader that the possible sign conventions on grading shifts can be safely ignored. Here we also justify our use of the graded-commutative center. This section contains no new material and can be skipped without fear if one is familiar with graded modules over graded rings.

Let $R$ be an $\sphere$-algebra, then $\pi_* R$ is a graded ring and the functor $\pi_*$ sends $R$-modules to graded $\pi_* R$-modules. However, for an $R$-module $M$ the graded module $\pi_* \Sigma M$ is not simply a grading shift of $\pi_* M$. Indeed there is a sign twist involved in the action of $\pi_* R$ on $\pi_* \Sigma M$. Thus the grading shift functor we consider has a sign twist built into it.

\begin{definition}
\label{def: definition of the twisted grading shift functor}
Let $\ring$ be a graded ring and $\module$ a graded $\ring$-module. We define the \emph{twisted grading shift} of $\module$, denoted $\shift\module$, to be the graded $\ring$-module whose underlying graded abelian group is given by $(\shift\module)_n=\module_{n-1}$. The $\ring$-module structure on $\shift\module$ is given by
\[r \shift m = (-1)^{|r|}\shift (r m)\]
for $r\in \ring$ and $m \in \module$. We denote the standard grading shift functor by $\Sshift$.
\end{definition}

Note that if $\pi_* R$ is concentrated in even degrees, then there is no sign twist involved in the functor $\shift$ and it becomes the standard grading shift functor.

\begin{example}
If $\ring=\pi_* R$ for some $\sphere$-algebra $R$, then for any $R$-module $M$: $\pi_* \Sigma M = \shift \pi_* M$.
\end{example}

\begin{lemma}
There is a natural isomorphism $\Sshift \cong \shift$.
\end{lemma}
\begin{proof}
Assigning $Sm \mapsto (-1)^{|m|}\shift m$ for $m\in\module$ yields the desired isomorphism.
\end{proof}

It is the isomorphism $\Sshift \cong \shift$ which shows that our choice of using the twisted grading shift functor does not matter.

The category of $\ring$-modules has as objects graded $\ring$-modules and as morphisms $\ring$-homomorphisms of degree zero. For $\ring$-modules $\module$ and $\Nmodule$ we denote by $\hom_\ring(\module,\Nmodule)$ the abelian group of morphisms $\module \to \Nmodule$ in the category of $\ring$-modules. It is easy to see that this is an abelian category with enough projectives and injectives and one can construct its derived category $\Derived(\ring)$ in a standard fashion. For a more detailed discussion on the construction of $\Derived(\ring)$ see~\cite{YekutieliDualizing}.

\begin{definition}
\label{def: definition of graded Hom over a graded ring}
Let $\module$ and $\Nmodule$ be $\ring$-modules. Let $\Hom_\ring(\module,\Nmodule)$ be the graded abelian group
\[\Hom_\ring(\module,\Nmodule)\ing{n} = \hom_\ring(\shift^n \module,\Nmodule) = \hom_\ring(\module,\shift^{-n}\Nmodule)\]
The $\Hom_\ring(-,-)$ functor on $\ring$-modules defined above has a standard extension to $\ring$-chain complexes. As noted in Section~\ref{sec: Introduction}, we always assume to have taken a correct resolution before applying this functor.
\end{definition}

\begin{remark}
We remind the reader of our convention that $\hom_{\Derived(\ring)}^*(-,-)$ is in fact bigraded. Thus for $\ring$-chain complexes $\Xcomplex$ and $\Ycomplex$ we have
\[\hom_{\Derived(\ring)}^{p,q} (\Xcomplex,\Ycomplex) = \hom_{\Derived(\ring)}(\Xcomplex, \trans^p \shift^q \Ycomplex) \cong
(H^p\Hom_\ring(\Xcomplex,\Ycomplex))\ing{-q} \]
\end{remark}

\begin{remark}
Suppose $\Emodule$ is an injective $\pi_*R$-module, where $R$ is an $\sphere$-algebra. By Brown representability there exists an $R$-module $E$ and a natural isomorphism $\hom_{\Derived(R)}(M,E) \cong \hom_{\pi_* R}(\pi_* M, \Emodule)$ for every $R$-module $M$. The upshot of Definition~\ref{def: definition of graded Hom over a graded ring} above is that now the same natural isomorphism gives
\[ \pi_n\Hom_R(M,E) \cong \Hom_{\pi_*R}(\pi_* M,\Emodule)\ing{n} \]
\end{remark}

\begin{remark}
Suppose $x \in \ring$ satisfies $xy=(-1)^{|x||y|} yx$ for all $y\in \ring$. Then it is a simple exercise to see that multiplication by $x$ yields an $\ring$-module map $\shift^{|x|} \module \to \module$ for an $\ring$-module $\module$. This justifies calling such an element ``central''.
\end{remark}

\section{Cellular approximation and nullification}
\label{sec: Cellular approximation and nullification}

The key ingredient of this paper is the notion of cellular approximation. It is therefore worthwhile to set aside a section on its definition and properties. We shall follow the definitions set by Dwyer and Greenlees in~\cite{DwyerGreenlees}, given here in the setting of a general triangulated category. This allows for a unified treatment of both situations we have in mind, namely the derived category of an $\sphere$-algebra and the derived category of a graded ring.

Recall that a full subcategory $\cc$ of a triangulated category $\Derived$ is called a \emph{localizing} subcategory if $\cc$ is closed under completion of triangles and arbitrary coproducts. In the case of the derived category of a graded ring we also demand that a localizing subcategory be closed under the twisted grading shift functor $\shift$ . The localizing subcategory generated by a class of objects $\Aclass \subset \Derived$ is the minimal localizing subcategory of $\Derived$ among all localizing subcategories which contain $\Aclass$. We denote this localizing subcategory by $\loc{\Aclass}$.

\begin{definition}
\label{def: Cellularization}
Let $\Derived$ be a triangulated category and let $\Aclass$ be a class of objects in $\Derived$. A morphism $f:X\to Y$ in $\Derived$ is called an \emph{$\Aclass$-equivalence} if for every $A\in\Aclass$ the morphism $\hom_\Derived^*(A,f)$ is an isomorphism. An object $X\in \Derived$ is \emph{$\Aclass$-cellular} if $X\in\loc{\Aclass}$. We say $C$ is an \emph{$\Aclass$-cellular approximation of $X$}, if $C$ is $\Aclass$-cellular and there is an $\Aclass$-equivalence $\eta:C \to X$. An $\Aclass$-cellular approximation of $X$ shall be denoted by $\cell_\Aclass X$. If $\Derived=\Derived(R)$ for some $\sphere$-algebra or graded ring $R$ then we denote an $\Aclass$-cellular approximation of $X$ by $\cell_\Aclass^R X$ whenever we need to emphasize the category we work over.
\end{definition}

Clearly, $\Aclass$-cellular approximation depends only on the localizing category generated by $\Aclass$, i.e. if $\loc{\Aclass}=\loc{\Bclass}$ then $\cell_\Aclass X \cong \cell_\Bclass X$ whenever such cellular approximation exists. A similar remark applies for $\Aclass$-equivalences. This implies that the $\Aclass$-equivalence $\cell_{\Aclass} X \to X$ is an $\loc{\Aclass}$-colocalization of $X$. To make the terminology a bit more convenient we make the following definition.

\begin{definition}
Let $\Aclass$ be a class of objects in $\Derived$. We say that a map $\eta:C \to X$ is an \emph{$\Aclass$-colocalization of $X$} if $\eta$ is an $\loc{\Aclass}$-colocalization of $X$. Thus $C$ is $\cell_\Aclass X$ and $\eta$ is an $\Aclass$-equivalence.
\end{definition}

The counterpart of cellular approximation is \emph{nullification}, described below.
\begin{definition}
\label{def: Nullification}
An object $X\in \Derived$ is called \emph{$\Aclass$-null} if $\hom_\Derived^*(A,X)=0$ for every $A\in\Aclass$. An object $N$ is an \emph{$\Aclass$-nullification of $X$} if there is a morphism $\nu:X \to N$  which is initial among maps in $\Derived$ from $X$ to $\Aclass$-null objects. This is equivalent to saying that $N$ is $\Aclass$-null and every cone of $\nu$ is $\Aclass$-cellular. The morphism $\nu$ shall be called an \emph{$\Aclass$-nullification map} of $X$. We use the notation $\Null_\Aclass X$ for an $\Aclass$-nullification of $X$ in $\Derived$.
\end{definition}

\begin{remark}
It is worthwhile mentioning the accepted notation in algebra, which is in fact more convenient in this case. The class of $\Aclass$-null objects is commonly denoted $\Aclass^\perp$. Assuming that $\Aclass$-cellular approximation exists for every object in $\Derived$, it is then well known that the $\Aclass$-cellular objects are the objects $C$ satisfying $\hom_\Derived^*(C,N)=0$ for every $N\in \Aclass^\perp$; in this case the class of $\Aclass$-cellular objects can be denoted by $^\perp(\Aclass^\perp)$.
\end{remark}

It is easy to see that for any $X$ there is a distinguished triangle:
\[ \cell_\Aclass X \xrightarrow{\eta} X \xrightarrow{\nu} \Null_\Aclass X\]
where $\eta$ is an $\Aclass$-equivalence map of $X$ and $\nu$ is an $\Aclass$-nullification map of $X$. Hence the existence of $\Aclass$-cellular approximation implies the existence of $\Aclass$-nullification and vice versa.

In general, $\Aclass$-cellular approximations of objects need not exist. However, when $\loc{\Aclass}$ is generated by a set of objects and $\Derived$ is the derived category of an $\sphere$-algebra (or a graded ring) then $\Aclass$-cellular approximations exist for every object in $\Derived$. This follows from~\cite{Hirschhorn} (see also~\cite{FarjounBook} for cellular approximation in a topological setting). Note that if $\cell_\Aclass X$ can be formed for every $X$, then an object $C$ is $\Aclass$-cellular if and only if $\hom_\Derived^*(C,N)=0$ for every $\Aclass$-null object $N$ (see for example \cite{DwyerGreenlees}).

The following proposition records for future reference well known equivalent definitions of $\Aclass$-cellular approximation. Its proof is trivial and therefore omitted.
\begin{proposition}
\label{pro: Equivalent conditions for Cell and Null}
For a morphism $\eta:C \to X$ in $\Derived$ the following are equivalent:
\begin{enumerate}
  \item $\eta$ is an $\Aclass$-equivalence and $C$ is $\Aclass$-cellular,
  \item the morphism $X \to \cone(\eta)$ is an $\Aclass$-nullification map,
  \item $C$ is $\Aclass$-cellular and $\cone(\eta)$ is $\Aclass$-null.
\end{enumerate}
\end{proposition}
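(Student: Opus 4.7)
The strategy is to use the distinguished triangle $C \xrightarrow{\eta} X \to \cone(\eta) \to \trans C$ together with the long exact sequence obtained by applying $\hom_\Derived^*(A,-)$ for $A \in \Aclass$. I would prove the equivalence as the two-step cycle (1) $\Leftrightarrow$ (3) and (3) $\Leftrightarrow$ (2), rather than a full three-step cycle, since the second equivalence is essentially built into the definition of nullification.

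For (1) $\Rightarrow$ (3): assuming $\eta$ is an $\Aclass$-equivalence, I would apply $\hom_\Derived^*(A,-)$ to the triangle for an arbitrary $A \in \Aclass$. Since $\hom_\Derived^*(A,\eta)$ is an isomorphism of graded abelian groups, the long exact sequence forces $\hom_\Derived^*(A,\cone(\eta))=0$, so $\cone(\eta)$ is $\Aclass$-null. For (3) $\Rightarrow$ (1): assuming $C$ is $\Aclass$-cellular and $\cone(\eta)$ is $\Aclass$-null, I note that $\trans^{-1}\cone(\eta)$ is also $\Aclass$-null (since $\hom_\Derived^*(A,-)$ is graded), so in the long exact sequence both neighbours of $\hom_\Derived^*(A,\eta)$ vanish, making it an isomorphism; this is exactly the $\Aclass$-equivalence condition.

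For (3) $\Leftrightarrow$ (2): the definition of $\Aclass$-nullification already provides the equivalent characterization ``$N$ is $\Aclass$-null and every cone of $\nu$ is $\Aclass$-cellular''. Rotating the triangle $C \xrightarrow{\eta} X \to \cone(\eta) \to \trans C$ shows that $\trans C$ is a cone of the map $X \to \cone(\eta)$. Since localizing subcategories are closed under the translation functor $\trans$, the object $C$ is $\Aclass$-cellular if and only if $\trans C$ is $\Aclass$-cellular. Therefore the condition ``$C$ is $\Aclass$-cellular and $\cone(\eta)$ is $\Aclass$-null'' is literally the condition for $X \to \cone(\eta)$ to be an $\Aclass$-nullification under the equivalent formulation in the definition.

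I expect no real obstacle here: the argument is entirely formal, and the only subtlety worth flagging is the graded nature of $\hom_\Derived^*$, which ensures that $\Aclass$-nullity is preserved by $\trans^{\pm 1}$ and hence that the long exact sequence argument yields an isomorphism in all degrees at once. The step (3) $\Leftrightarrow$ (2) is essentially a restatement of the definition, so the real content lies in the triangle computation for (1) $\Leftrightarrow$ (3).
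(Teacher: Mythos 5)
Your proof is correct, and since the paper explicitly omits the argument as trivial, your write-up simply supplies the standard formal proof the author had in mind: the long exact sequence of $\hom_\Derived^*(A,-)$ applied to the triangle $C \to X \to \cone(\eta) \to \trans C$ for $(1)\Leftrightarrow(3)$, and the rotation of that triangle combined with closure of $\loc{\Aclass}$ under $\trans$ for $(2)\Leftrightarrow(3)$. No gaps; the one subtlety you flag (gradedness making nullity stable under $\trans^{\pm1}$) is exactly the right thing to note.
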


We also record a well known principle of recognizing $\Aclass$-cellular objects. We give this principle in both in the setting of a derived category of an $\sphere$-algebra (Lemma~\ref{lem: Bounded above module spectrum is cellular pi_0-cellular}) and in that of a graded ring (Lemma~\ref{lem: Bounded above chain comples is cellular}). The proof of this principle, applicable for both cases, can be found for example in~\cite{ShamirEMSS}.

Recall that if $R$ is a connective $\sphere$-algebra then one can form an Eilenberg-Mac Lane spectrum $H\pi_0(R)$ and a map of $\sphere$-algebras $R \to H\pi_0(R)$ realizing the isomorphism on $\pi_0$ of both $\sphere$-algebras (see~\cite[IV.3]{EKMM}).
\begin{lemma}
\label{lem: Bounded above module spectrum is cellular pi_0-cellular}
Suppose $R$ is a connective $\sphere$-algebra. Let $M$ be a bounded-above $R$-module, then $M$ is $\{H\pi_nM\}_{n\in\Int}$-cellular.
\end{lemma}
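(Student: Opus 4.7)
The plan is to build $M$ from the family $\{H\pi_nM\}_{n\in\Int}$ via its Postnikov tower, using only that localizing subcategories are closed under completing triangles and sequential homotopy colimits. Since $R$ is connective, the $\sphere$-algebra map $R \to H\pi_0R$ turns any $\pi_0R$-module into an $R$-module; in particular each homotopy group $\pi_nM$, regarded as a $\pi_0R$-module via the $\pi_*R$-action, gives a well-defined $R$-module $H\pi_nM$.

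Fix a grading bound $n_0$ with $\pi_iM=0$ for all $i>n_0$, and consider the Postnikov truncations $\tau_{\geq k}M$, whose homotopy agrees with $\pi_*M$ in degrees $\geq k$ and vanishes below $k$. A direct check on homotopy groups produces, for each $k\leq n_0$, a triangle
\[ \tau_{\geq k+1}M \to \tau_{\geq k}M \to \Sigma^{k} H\pi_kM, \]
the cofiber being concentrated in degree $k$ where it equals $\pi_kM$. Since $\pi_iM=0$ for $i>n_0$, one has $\tau_{\geq n_0}M\simeq \Sigma^{n_0}H\pi_{n_0}M$, which lies in $\loc{\{H\pi_nM\}}$. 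Descending induction on $k$, using that two out of three objects in a triangle being cellular forces the third to be cellular, then shows that every $\tau_{\geq k}M$ lies in $\loc{\{H\pi_nM\}}$.

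To finish, one identifies $M$ with $\hocolim_{k\to -\infty}\tau_{\geq k}M$: the transition maps $\tau_{\geq k+1}M\to \tau_{\geq k}M$ become identities on $\pi_i$ as soon as $k\leq i$, and $\pi_i$ commutes with sequential homotopy colimits, so the comparison map $\hocolim_k \tau_{\geq k}M \to M$ is an isomorphism on all $\pi_i$. Presented as a mapping telescope, such a sequential $\hocolim$ is the cofiber of a self-map of a countable coproduct, hence is built from ingredients to which $\loc{\{H\pi_nM\}}$ is closed; this yields $M\in\loc{\{H\pi_nM\}}$.

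The genuinely delicate step in this plan is the hocolim identification (and making sure the indexing and direction of the Postnikov tower are set up so that the mapping telescope lands on $M$ rather than on $0$); everything else is formal manipulation of triangles. For a bounded-above module over a connective $\sphere$-algebra this identification is standard, as recorded in~\cite{ShamirEMSS}.
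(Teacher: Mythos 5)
Your proof is correct. The paper itself gives no argument for this lemma (it only cites \cite{ShamirEMSS}), and your Postnikov-tower argument --- descending induction on the truncations $\tau_{\geq k}M$ starting from the top degree $n_0$, followed by the telescope identification $M\simeq \hocolim_k \tau_{\geq k}M$ --- is exactly the standard proof the citation points to; the only step worth making explicit is that an $R$-module with homotopy concentrated in a single degree is equivalent to the corresponding Eilenberg-Mac Lane $R$-module, which holds because $R$ is connective so the module structure factors through $R\to H\pi_0R$.
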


\begin{lemma}
\label{lem: Bounded above chain comples is cellular}
Suppose $\ring$ is a graded ring and $\Aclass$ is a class of $\ring$-modules. If $C$ is an $\ring$-complex such that
\begin{enumerate}
\item $C$ is bounded above and
\item $H_n C$ is an $\Aclass$-cellular $\ring$-module for all $n$.
\end{enumerate}
Then $C$ is an $\Aclass$-cellular complex.
\end{lemma}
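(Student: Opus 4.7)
My plan is to build $C$ from its homology modules using truncation triangles together with a homotopy colimit, relying throughout on the fact that $\loc{\Aclass}$ is closed under shifts, coproducts, and completion of triangles, and hence also under homotopy colimits (which can be realized as mapping telescopes assembled from coproducts and cones).

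First I would reduce to the bounded case. Since $C$ is bounded above, each good truncation $\tau_{\geq -m} C$ is a bounded $\ring$-complex whose nonzero homology modules are among the $H_n C$, hence $\Aclass$-cellular by hypothesis. The natural maps $\tau_{\geq -m-1} C \to \tau_{\geq -m} C$ assemble into a sequence with $C \simeq \hocolim_m \tau_{\geq -m} C$ in $\Derived(\ring)$, so closure of $\loc{\Aclass}$ under homotopy colimits reduces the statement to the case in which $C$ has only finitely many nonzero homology modules.

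In the bounded case I would induct on the number of nonzero $H_n C$. If only $H_m C$ is nonzero then $C \simeq \trans^m H_m C$, which lies in $\loc{\Aclass}$ since $H_m C$ does and $\loc{\Aclass}$ is closed under shifts. For the inductive step, letting $m$ be the smallest integer with $H_m C \neq 0$, the truncation triangle
\[ \trans^{m} H_m C \longrightarrow C \longrightarrow \tau_{> m} C \]
in $\Derived(\ring)$ exhibits $C$ as an extension of two objects already known to be in $\loc{\Aclass}$, completing the induction.

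I do not expect a genuine obstacle here: all of the substance is packaged into the standard closure properties of $\loc{\Aclass}$ recalled in Section~\ref{sec: Cellular approximation and nullification}. The only mildly delicate point is verifying that the indicated homotopy colimit of truncations really recovers $C$ in the chain-complex indexing conventions used in the paper, which amounts to a routine bookkeeping check rather than a substantive difficulty.
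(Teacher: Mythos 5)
Your argument is correct and is the standard one; note that the paper itself does not prove this lemma but defers it to the cited reference \cite{ShamirEMSS}, where essentially this truncation-plus-telescope argument appears. The only point where the proof could actually go wrong is the one you flag as bookkeeping: you must model $\tau_{\geq -m}C$ as the \emph{subcomplex} truncation ($\cdots \to C_{-m+1}\to \ker d_{-m} \to 0$), so that the truncations form an increasing exhaustive filtration of $C$ with maps $\tau_{\geq -m}C \to \tau_{\geq -m-1}C$ (your displayed arrows point the wrong way) and $C$ is their honest homotopy \emph{colimit}; with the quotient model the natural maps run the other direction and present $C$ as a homotopy \emph{limit}, which $\loc{\Aclass}$ — being closed under coproducts and triangles but not products — need not preserve. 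The reversed orientation of your truncation triangle in the inductive step is harmless, since a localizing subcategory containing two terms of a triangle contains the third regardless of rotation.
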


\section{Colocalization with respect to a hereditary torsion theory}
\label{sec: Colocalization with respect to a hereditary torsion theory}

In order to construct the spectral-sequence we require several of the results of~\cite{ShamirTorsion}, which deal with colocalization with respect to a hereditary torsion theory. Note that in~\cite{ShamirTorsion} the setting considered was that of modules over a ring, while the setting needed here is that of graded modules over a graded ring. Nevertheless, the results (and proofs) of~\cite{ShamirTorsion} apply also to the graded case. Throughout this section $\ring$ denotes a graded ring.

We begin by recalling the definition and some properties of hereditary torsion theories. These do not change when passing to the setting of graded modules over a graded ring. A comprehensive treatment of this subject can be found, for example, in~\cite{Stenstrom}.
\begin{definition}
\label{def: Hereditary torsion theory}
A \emph{hereditary torsion class} $\tc$ is a class of $\ring$-modules that is closed under submodules, quotient modules, coproducts and extensions. Since we are working in a graded setting, we shall also require a hereditary torsion class to be closed under the twisted grading shift functor $\shift$. Closure under extensions means that if $0 \to \module_1 \to \module_2 \to \module_3 \to 0$ is a short exact sequence with $\module_1$ and $\module_3$ in $\tc$, then so is $\module_2$. The modules in $\tc$ will be called \emph{$\tc$-torsion modules} (or just torsion modules when the torsion theory is clear from the context). The class of \emph{torsion-free} modules $\ff$ is the class of all modules $\module$ satisfying $\ext_\ring^0(\Cmodule,\module)=0$ for every $\Cmodule \in \tc$. The pair $(\tc,\ff)$ is referred to as a \emph{hereditary torsion theory}.

To every hereditary torsion theory $(\tc,\ff)$ there is an \emph{associated radical} $t$. Given an $\ring$-module $\module$, the module $t(\module)$ is the largest torsion submodule of $\module$. Because $\tc$ is hereditary, the module $\module/t(\module)$ is a torsion-free module.

An $\ring$-module $\Emodule$ is called a \emph{cogenerator} for a hereditary torsion class $\tc$ if $\tc$ is the class of modules $\module$ such that $\Hom_\ring(\module,\Emodule)=0$. As in the non-graded case, every hereditary torsion class $\tc$ has an injective cogenerator, see Lemma~\ref{lem: injective cogenerator for a hereditary torsion theory} below.
\end{definition}

\begin{lemma}
\label{lem: injective cogenerator for a hereditary torsion theory}
Every hereditary torsion class has an injective cogenerator.
\end{lemma}
\begin{proof}
The proof is almost the same as the one given in \cite[VI.3.7]{Stenstrom} for the non-graded case. A minor change in the definition of the injective cogenerator is needed because of the requirement that $\tc$ be closed under twisted grading shift.

Let $(\tc,\ff)$ be a hereditary torsion theory. For an injective cogenerator of $\tc$ one can take the injective module $\Emodule=\prod \shift^n\Emodule(\ring/\aideal)$, where $n\in \Int$, $\aideal$ goes over all left ideals of $\ring$ such that $\ring/\aideal$ is torsion-free and $\Emodule(\ring/\aideal)$ is the injective hull of $\ring/\aideal$. The rest of the proof, showing that $\Emodule$ is an injective cogenerator for $\tc$, is the same as in~\cite{Stenstrom}.
\end{proof}

The main example we have in mind is the following.
\begin{example}
\label{exa: Hereditary torsion generated by Ring/Ideal}
Suppose $\ideal \subset \ring$ is a two-sided ideal of $\ring$ that is finitely generated as a left $\ring$-module. Recall an $\ring$-module $\module$ is an \emph{$\ideal$-power torsion} module if for every $m\in \module$ there exists some $n$ such that $\ideal^n m=0$. It is not difficult to show that the class $\tc$ of $\ideal$-power torsion modules is a hereditary torsion class, see~\cite[VI.6.10]{Stenstrom}. The radical of $\tc$ is given by $t(\module)=\{m\in \module\,|\,\ideal^n m=0 \text{ for some } n\}$ and the quotient $\module/t(\module)$ has no $\ideal$-power torsion elements.
\end{example}

We record the following results from~\cite{ShamirTorsion}, these pass to the graded case with no change.
\begin{proposition}[{\cite[Lemma 2.8]{ShamirTorsion}}]
\label{pro: Hereditary torsion class generated by cyclics}
Let $\tc$ be a hereditary torsion class and denote by $\Cyc_\tc$ the set of all cyclic $\tc$-torsion modules. Then $\loc{\tc}=\loc{\Cyc_\tc}$ and therefore $\tc$-colocalization and $\tc$-nullification exist for every $\ring$-complex.
\end{proposition}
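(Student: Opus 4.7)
My plan is to establish $\loc{\tc}=\loc{\cc_\tc}$ first, and then invoke the cited result from Hirschhorn to obtain cellular approximation: the key point is that $\cc_\tc$ is essentially a set, since every cyclic graded $\ring$-module is isomorphic to $\Sigma^n(\ring/\aideal)$ for some left graded ideal $\aideal \subset \ring$ and some $n\in\Int$, and the $\tc$-torsion ones form a subclass of this set. The inclusion $\loc{\cc_\tc}\subseteq \loc{\tc}$ is immediate because $\cc_\tc\subseteq \tc \subseteq \loc{\tc}$, viewing modules as complexes concentrated in degree zero.

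For the reverse inclusion I would take an arbitrary $\module\in\tc$ and construct a transfinite filtration with cyclic subquotients. Well-order the elements of $\module$ as $\{m_\alpha\}_{\alpha<\kappa}$ for some ordinal $\kappa$, and let $\module_\alpha$ be the $\ring$-submodule generated by $\{m_\beta\}_{\beta<\alpha}$. Then $\module_0=0$, $\module_\kappa=\module$, the union clause $\module_\lambda=\bigcup_{\alpha<\lambda}\module_\alpha$ holds at limit ordinals, and each successor quotient $\module_{\alpha+1}/\module_\alpha$ is cyclic (generated by the image of $m_\alpha$). Since $\tc$ is hereditary, every $\module_\alpha$ is torsion (it is a submodule of $\module$), and hence each cyclic quotient belongs to $\cc_\tc$.

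I would then prove by transfinite induction that each $\module_\alpha\in\loc{\cc_\tc}$. The base case is trivial. For the successor step, the short exact sequence of modules $0\to\module_\alpha\to\module_{\alpha+1}\to\module_{\alpha+1}/\module_\alpha\to 0$ yields a triangle in $\Derived(\ring)$, so $\module_{\alpha+1}\in\loc{\cc_\tc}$ by closure of localizing subcategories under completion of triangles. For the limit step I would identify $\module_\lambda$, which is a filtered colimit along monomorphisms, with the homotopy colimit of the system $(\module_\alpha)_{\alpha<\lambda}$; this homotopy colimit is built from coproducts and mapping cones of the structure maps, hence lies in $\loc{\cc_\tc}$. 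Taking $\alpha=\kappa$ gives $\module\in\loc{\cc_\tc}$, completing the proof that $\loc{\tc}=\loc{\cc_\tc}$. The existence of $\tc$-cellular approximation (and therefore $\tc$-nullification) for every $\ring$-complex then follows from the Hirschhorn-type result cited in Section~\ref{sec: Cellular approximation and nullification}, since $\cc_\tc$ is a set.

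I expect the main obstacle to be the limit step: making precise that a transfinite directed colimit of module inclusions is genuinely a homotopy colimit in $\Derived(\ring)$, in particular handling regular cardinals larger than $\aleph_0$ where the standard telescope/mapping-cylinder description needs to be replaced by a transfinite analogue. This is a standard fact for chain complexes of modules (a consequence of the projective model structure on $\ring$-complexes, where filtered colimits of cofibrations are homotopy colimits), but it is the only non-formal input and deserves an explicit reference.
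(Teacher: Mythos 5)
Your argument is correct, and in fact the paper gives no proof of this proposition at all --- it simply cites \cite[Lemma 2.8]{ShamirTorsion} --- so your transfinite filtration by submodules with cyclic torsion subquotients is a legitimate reconstruction of the standard argument. Two small points. First, since everything here is graded, you should well-order the \emph{homogeneous} elements of $\module$, so that each $\module_\alpha$ is a graded submodule and each subquotient is generated by a single homogeneous element, i.e.\ is a quotient of some $\Sigma^n\ring$; this is also what makes $\cc_\tc$ a set up to isomorphism. Second, you have correctly isolated the only non-formal input, namely that the union of a transfinite chain of monomorphisms lies in any localizing subcategory containing the stages; this is true (a transfinite composition of cofibrations in the projective model structure computes the homotopy colimit), but it can be avoided entirely. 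Since $\tc$ is hereditary, the kernel of the surjection $\bigoplus_{m\in\module}\ring m \twoheadrightarrow \module$ is again torsion, so iterating gives a resolution of $\module$ by coproducts of cyclic torsion modules; a bounded-above complex whose terms lie in $\loc{\cc_\tc}$ belongs to $\loc{\cc_\tc}$ by brutal truncation and an $\omega$-indexed telescope, i.e.\ the cone of $1-\mathrm{shift}$ on a coproduct, which is elementary. This resolution route is consistent with the way the companion statement (Proposition~\ref{pro: Bounded above is Cellular}) is used in the paper, and it trades your transfinite induction for a single sequential homotopy colimit.
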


\begin{proposition}[{\cite[Corollary 2.11]{ShamirTorsion}}]
\label{pro: Bounded above is Cellular}
Let $\tc$ be a hereditary torsion class, then a bounded-above $\ring$-complex $\Xcomplex$ is $\tc$-cellular if and only if $H_n(\Xcomplex)$ is $\tc$-torsion for all $n$.
\end{proposition}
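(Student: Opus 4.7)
The plan is to prove the two implications separately, using the defining closure properties of a hereditary torsion class for one direction and Lemma~\ref{lem: Bounded above chain comples is cellular} for the other.

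For the forward direction, I would show that the full subcategory
\[ \Kcomplex = \{ \Xcomplex \in \Derived(\ring) \, | \, H_n(\Xcomplex) \in \tc \text{ for all } n\} \]
is a localizing subcategory of $\Derived(\ring)$ containing $\tc$. The containment $\tc \subset \Kcomplex$ is immediate, since a module in $\tc$, viewed as a complex concentrated in degree zero, has all its homology either zero or equal to itself. Closure under arbitrary coproducts follows because homology commutes with coproducts in $\Derived(\ring)$ and $\tc$ is closed under coproducts. Closure under completion of triangles is the key calculation: given a triangle $\Xcomplex \to \Ycomplex \to \Zcomplex$ with two of the three objects in $\Kcomplex$, the long exact sequence in homology together with the fact that $\tc$ is closed under submodules, quotients, and extensions forces the homology of the third to lie in $\tc$. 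Having established that $\Kcomplex$ is localizing and contains $\tc$, minimality of $\loc{\tc}$ gives $\loc{\tc} \subset \Kcomplex$, which is exactly the forward implication. Note this direction does not require the bounded-above hypothesis.

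For the backward direction, suppose $\Xcomplex$ is bounded above with $H_n(\Xcomplex) \in \tc$ for every $n$. Each such homology module, regarded as an $\ring$-complex concentrated in a single degree, lies in $\tc \subset \loc{\tc}$, so each $H_n(\Xcomplex)$ is a $\tc$-cellular $\ring$-module. Lemma~\ref{lem: Bounded above chain comples is cellular}, applied with $\Aclass = \tc$, then yields that $\Xcomplex$ itself is $\tc$-cellular, completing the proof.

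The main obstacle is really the forward direction, and specifically checking that $\Kcomplex$ is closed under completion of triangles: this is where all three closure properties of a hereditary torsion class (submodules, quotient modules, extensions) get used simultaneously via the long exact sequence. Everything else is either immediate from the definitions or a direct appeal to Lemma~\ref{lem: Bounded above chain comples is cellular}; in particular, the bounded-above hypothesis is used only to invoke that lemma, which is why it is needed solely in the "if" direction.
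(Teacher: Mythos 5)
Your proof is correct; the paper itself only cites an external reference for this statement, but your argument --- showing for the ``only if'' direction that the subcategory of complexes with $\tc$-torsion homology is localizing (via the long exact sequence and the three closure properties of $\tc$), and invoking Lemma~\ref{lem: Bounded above chain comples is cellular} for the ``if'' direction --- is precisely the standard route that the surrounding lemmas are set up for. No gaps.
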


We shall also require the following result.
\begin{lemma}
\label{lem: Cellularization at R/I is I-power torsion when}
Let $\ring \to \Sring$ be a surjection of graded rings and let $\ideal$ be the kernel of this map. Suppose that the class of $\ideal$-power torsion modules, denoted $\tc$, is a hereditary torsion class. Then $\loc{\tc}=\loc{\Sring}$ and therefore $\cell_\tc \simeq \cell_\Sring$.

\end{lemma}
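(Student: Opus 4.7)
The plan is to establish both inclusions $\loc{\Sring}\subset\loc{\tc}$ and $\loc{\tc}\subset\loc{\Sring}$; once this is done, the equivalence $\cell_\tc\simeq\cell_\Sring$ follows from the observation, made immediately after Definition~\ref{def: Cellularization}, that cellular approximations depend only on the generated localizing subcategory. The inclusion $\loc{\Sring}\subset\loc{\tc}$ is immediate, because $\ideal\cdot\Sring=0$ says $\Sring$ is $\ideal$-power torsion, so $\Sring\in\tc\subset\loc{\tc}$.

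For the reverse inclusion I would first apply Proposition~\ref{pro: Hereditary torsion class generated by cyclics} to replace $\tc$ by the set $\cc_\tc$ of cyclic $\tc$-torsion modules, reducing the problem to showing that every cyclic $\ideal$-power torsion module $M=\ring/\aideal$ lies in $\loc{\Sring}$. Any such $M$ is annihilated by some $\ideal^n$, so it has the finite descending filtration
\[0=\ideal^n M\subset\ideal^{n-1}M\subset\cdots\subset\ideal M\subset M,\]
whose successive quotients $\ideal^i M/\ideal^{i+1}M$ are annihilated by $\ideal$ and so carry a natural $\Sring$-module structure. The short exact sequences from this filtration become triangles in $\Derived(\ring)$, so by induction along the filtration it is enough to show that every $\Sring$-module, viewed as an object of $\Derived(\ring)$, belongs to $\loc{\Sring}$.

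To settle this last point, any $\Sring$-module $\Bmodule$ has a bounded-above free resolution $\cdots\to F_1\to F_0\to\Bmodule\to 0$ by free graded $\Sring$-modules. Each $F_i$ is a coproduct of shifts of $\Sring$, so lies in $\loc{\Sring}$ inside $\Derived(\ring)$. The complex $F_\bullet$ represents $\Bmodule$ in $\Derived(\ring)$ and is the sequential colimit of its brutal truncations $\sigma_{\leq k}F_\bullet$, each of which is a finite iterated extension of the $F_i$ and so lies in $\loc{\Sring}$; closure of $\loc{\Sring}$ under such colimits places $\Bmodule$ into $\loc{\Sring}$ as well.

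The hard part of the argument is precisely this last step, where one passes from $\Derived(\Sring)$, in which every module is tautologically $\Sring$-cellular, to $\Derived(\ring)$, and has to check that the $\Sring$-free resolution still realizes $\Bmodule$ and stays inside $\loc{\Sring}$ there. This reduces to the observation that restriction of scalars along $\ring\to\Sring$ is exact and coproduct-preserving. Once that is noted the rest of the proof is purely formal, and in particular no further use is made of the finite generation of $\ideal$ beyond what is already built into the definition of the torsion class $\tc$.
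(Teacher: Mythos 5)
Your argument is correct and follows essentially the same route as the paper's: reduce to cyclic $\tc$-torsion modules via Proposition~\ref{pro: Hereditary torsion class generated by cyclics}, kill such a module with a finite $\ideal$-adic filtration whose graded pieces are $\Sring$-modules, and observe that $\Sring$-modules, restricted to $\Derived(\ring)$, lie in $\loc{\Sring}$. The one assertion you leave unjustified is that a cyclic $\ideal$-power torsion module $\module=\ring x$ is annihilated by a \emph{single} power $\ideal^n$ --- element-wise torsion only gives each element its own annihilating power, and this uniformity is exactly the step the paper spends a paragraph proving. The claim is true: if $\ideal^n x=0$, then for $a\in\ideal^n$ and $r\in\ring$ one has $a(rx)=(ar)x=0$ because $\ideal^n$ is a two-sided ideal, so $ar\in\ideal^n$; consequently your closing remark that finite generation of $\ideal$ is used only to make $\tc$ a hereditary torsion class (and hence to make Proposition~\ref{pro: Hereditary torsion class generated by cyclics} applicable) is defensible, even though the paper chooses to run this verification through a finite generating set of $\ideal^n$.
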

\begin{proof}
Since $\Sring \in \loc{\tc}$, we need only show that $\loc{\tc} \subset \loc{\Sring}$. As noted in Proposition~\ref{pro: Hereditary torsion class generated by cyclics}, $\loc{\tc}$ is generated by the cyclic $\tc$-torsion modules.  Any cyclic $\tc$-torsion module is a cyclic $\ring/\ideal^n$-module for some $n$. Hence it is enough to show that every $\ring/\ideal^n$-module is $\Sring$-cellular. It is easy to see that every $\ring/\ideal^n$-module is $\ring/\ideal^n$-cellular and therefore it suffices to show that $\ring/\ideal^n$ is $\Sring$-cellular for every $n$. There are short exact sequences
\[0 \to \ideal^{n-1}/\ideal^{n} \to \ring/\ideal^n \to \ring/\ideal^{n-1}\to 0.\]
Noting that $\ideal^{n-1}/\ideal^{n}$ is an $\Sring$-module and hence $\Sring$-cellular, and that $\loc{\Sring}$ is closed under triangles, it is easy to see that an inductive argument completes the proof.
\end{proof}

\section{Explicit construction of $\tc$-nullification}
\label{sec: Explicit construction of tc-nullification}

Let $\ring$ be a graded ring and let $(\tc,\ff)$ be a hereditary torsion theory on $\ring$-modules with an associated radical $t$. In~\cite{ShamirTorsion} there is described a construction for $\tc$-nullification using torsion-free injective modules (this construction is a generalization of a construction of Benson from~\cite{Benson}). The construction from~\cite{ShamirTorsion} is recorded below, with a minor modification.

\begin{construction}
\label{con: Nullification Construction}
Given an $\ring$-module $\module$, we construct a cochain complex \, $\nn=\nn^0 \xrightarrow{d^0} \nn^1 \xrightarrow{d^1} \nn^2 \xrightarrow{d^2} \cdots$\, inductively. Let $\module^0=\module$, let $\Gmodule^0 = \module^0/t(\module^0)$ and let $\nn^0$ be the injective hull of $\Gmodule^0$. Proceed by induction, thus
\begin{align*}
\module^{n+1} &= \nn^n/\Gmodule^n,\\
\Gmodule^{n+1} &= \module^{n+1}/t(\module^{n+1}) \text{ and }\\
\nn^{n+1} & \text{ is the injective hull of } \Gmodule^{n+1}.
\end{align*}
The differential $d^n:\nn^n \to \nn^{n+1}$ is the composition of the obvious morphisms $\nn^n \to \module^{n+1} \to \Gmodule^{n+1} \to \nn^{n+1}$. Note that the obvious morphism $\module \to \nn^0$ induces a map $\nu:\module \to \nn$ of $\ring$-complexes.
\end{construction}

\begin{lemma}[{\cite[Lemma 3.2]{ShamirTorsion}}]
\label{lem: Construction of tc-nullification}
The complex $\nn$ constructed in~\ref{con: Nullification Construction} is a $\tc$-nullification of $\module$ and the map $\nu: \module \to \nn$ is a $\tc$-nullification map of $\module$.
\end{lemma}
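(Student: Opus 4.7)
The plan is to verify the two defining properties of a $\tc$-nullification: that $\nn$ is $\tc$-null in $\Derived(\ring)$ and that the cone of $\nu$ is $\tc$-cellular. The key structural input is that each $\nn^n$ is a torsion-free injective $\ring$-module. Torsion-freeness follows from the standard fact that in a hereditary torsion theory the class $\ff$ is closed under essential extensions, and hence under injective hulls: since $\Gmodule^n = \module^n/t(\module^n)$ is torsion-free by definition of the radical and $\nn^n$ is its injective hull, $\nn^n$ lies in $\ff$.

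For the first point, let $T\in\tc$. Since $\nn$ is a bounded-below cochain complex of injective $\ring$-modules, $\hom^*_{\Derived(\ring)}(T,\nn)$ may be computed as the cohomology of the ordinary complex $\Hom_\ring(T,\nn^\bullet)$. Each term vanishes because $T\in\tc$ and $\nn^n\in\ff$, so $\hom^*_{\Derived(\ring)}(T,\nn)=0$. Invoking Proposition~\ref{pro: Hereditary torsion class generated by cyclics}, $\loc{\tc}$ is generated by cyclic torsion modules, and the class of objects $X$ with $\hom^*_{\Derived(\ring)}(X,\nn)=0$ is closed under triangles and coproducts; hence $\nn$ is $\loc{\tc}$-null, i.e.\ $\tc$-null.

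For the second point, I would analyse $\cone(\nu)$ through the short exact sequences built into the construction, namely $0\to t(\module^n)\to \module^n\to \Gmodule^n\to 0$ and $0\to \Gmodule^n\to \nn^n\to \module^{n+1}\to 0$. The cone is represented by the bounded-above chain complex obtained from the augmented cochain complex $\module\to \nn^0\to \nn^1\to\cdots$, with $\nn^n$ placed in chain degree $-n$. A direct diagram chase identifies the cohomology at $\module$ with $t(\module^0)$, the cohomology at $\nn^0$ with the quotient $\ker(d^0)/\Gmodule^0\cong t(\module^1)$, and similarly the cohomology at $\nn^n$ with $t(\module^{n+1})$; all of these lie in $\tc$. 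Therefore $\cone(\nu)$ is a bounded-above complex with $\tc$-torsion homology, so Proposition~\ref{pro: Bounded above is Cellular} gives that $\cone(\nu)$ is $\tc$-cellular.

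Combining these two points with the equivalence (3)$\Leftrightarrow$(2) in Proposition~\ref{pro: Equivalent conditions for Cell and Null} shows that $\nu:\module\to\nn$ is indeed a $\tc$-nullification map. The main technical obstacle is the homology computation for $\cone(\nu)$: the identifications are elementary but require careful bookkeeping across the two families of short exact sequences to see that every cohomology group collapses to some $t(\module^n)$, whereas the null-ness check reduces cleanly to the injective/torsion-free dichotomy.
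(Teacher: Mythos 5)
Your proposal is correct and follows essentially the same route as the paper: torsion-freeness of each $\nn^n$ (as the injective hull of a torsion-free module) gives that $\nn$ is $\tc$-null, the augmented complex $\module\to\nn^0\to\nn^1\to\cdots$ is identified as the fiber of $\nu$ with cohomology groups $t(\module^n)$, hence is $\tc$-cellular by the bounded-above criterion, and Proposition~\ref{pro: Equivalent conditions for Cell and Null} concludes. The only difference is that you spell out the details the paper leaves implicit (computing $\hom^*_{\Derived(\ring)}(T,\nn)$ via the ordinary Hom complex into injectives, and the diagram chase for the cohomology of the augmented complex), which is fine.
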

\begin{proof}
Every $\nn^n$ is the injective hull of a torsion-free module and therefore $\nn^n$ itself is torsion-free. Hence, $\nn$ is a $\tc$-null complex. Now consider the cochain complex $\Ccomplex$, given by:
\[ \module \to \nn^0 \to \nn^1 \to \nn^2 \to \cdots \]
It is easy to see that $H^n(\Ccomplex)=t(\module^n)$ for all $n\geq 0$. This implies that $\Ccomplex$ is $\tc$-cellular (Lemma~\ref{lem: Bounded above chain comples is cellular}).

There is a triangle $\Ccomplex \to \module \xrightarrow{\nu} \nn$. Since $\Ccomplex$ is $\tc$-cellular and $\nn$ is $\tc$-null then, by Proposition~\ref{pro: Equivalent conditions for Cell and Null}, $\Ccomplex$ is a $\tc$-cellular approximation of $\module$ and $\nn$ is a $\tc$-nullification of $\module$.
\end{proof}

One can use Lemma~\ref{lem: Construction of tc-nullification} alone to construct the desired spectral sequence. However, the following trick from~\cite{ShamirTorsion} will make the construction of the spectral sequence immediate.
\begin{theorem}[{\cite[Theorem 2.2]{ShamirTorsion}}]
\label{the: Nullification as double endomorphism}
Let $\tc$ be a hereditary torsion class. For every $\ring$-module $\module$ there exists a torsion-free injective $\ring$-module $\ee$ such that the natural map of complexes
\[\module \to \Hom_{\End_\ring(\ee)}(\Hom_\ring(\module,\ee),\ee)\]
is a $\tc$-nullification map of $\module$.
\end{theorem}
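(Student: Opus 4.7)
The plan is to identify the complex $D(\module) := \Hom_{\End_\ring(\ee)}(\Hom_\ring(\module, \ee), \ee)$ with the explicit $\tc$-nullification $\nn$ from Construction~\ref{con: Nullification Construction}, for a well-chosen torsion-free injective $\ee$. Concretely, I would pick $\ee$ to be a torsion-free injective $\ring$-module large enough that each of the modules $\nn^i$ appearing in the construction of $\nn$ for the given $\module$ is a direct summand of some copy (or power) of $\ee$. A natural candidate is the injective hull of $\bigoplus_i \Gmodule^i$, which is torsion-free and injective and absorbs every $\nn^i$ as a summand.

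First I would verify that $D(X)$ is $\tc$-null for every $\ring$-module $X$. Treating $\ee$ as an $(\ring, \End_\ring(\ee))$-bimodule, one has the standard Hom-swap isomorphism
\[
\Hom_\ring(T, D(X)) \;\cong\; \Hom_{\End_\ring(\ee)}(\Hom_\ring(X, \ee),\, \Hom_\ring(T, \ee)).
\]
Since $\ee$ is torsion-free injective, $\Hom_\ring(T, \ee) = 0$ for every $T \in \tc$, so the right-hand side vanishes and $D(X)$ is $\tc$-null.

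Next I would compare the natural evaluation map $\eta_\module : \module \to D(\module)$ with the explicit nullification map $\nu: \module \to \nn$ of Lemma~\ref{lem: Construction of tc-nullification}. The cellular piece $\Ccomplex$ sitting in the triangle $\Ccomplex \to \module \to \nn$ is bounded above with $\tc$-torsion cohomology, so $\Hom_\ring(\Ccomplex, \ee)$ vanishes levelwise and hence $D(\nu) : D(\module) \to D(\nn)$ is an equivalence. Combined with the naturality identity $D(\nu) \circ \eta_\module = \eta_\nn \circ \nu$, the problem reduces to showing that $\eta_\nn: \nn \to D(\nn)$ is a quasi-isomorphism, which by the choice of $\ee$ reduces further to the level-wise claim $\nn^i \simeq D(\nn^i)$ for each $i$.

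The main obstacle is precisely this reflexivity assertion. Conceptually it is a Morita-type identity: for the tautological case $\nn^i = \ee$ one has $\Hom_{\End_\ring(\ee)}(\End_\ring(\ee), \ee) \cong \ee$, and the summand case follows by additivity. The technical subtlety is that $D$ uses a \emph{derived} Hom over the graded ring $\End_\ring(\ee)$, so one must argue that the $\End_\ring(\ee)$-modules $\Hom_\ring(\nn^i, \ee)$ admit sufficiently explicit (projective or flat) presentations---most naturally produced from the summand structure---for the derived and underived double-duals to agree. Bookkeeping graded shifts and opposite-module conventions along the way is what I expect to absorb most of the effort; everything else is formal once $\ee$ has been chosen well.
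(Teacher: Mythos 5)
Your proposal is correct and follows essentially the route the paper itself relies on: the statement is only cited here from~\cite{ShamirTorsion}, but the choice of $\ee$ as a torsion-free injective absorbing every $\nn^i$ of the Nullification Construction~\ref{con: Nullification Construction} as a direct summand is exactly what the paper uses (it takes $\ee=\prod_i\nn^i$ in the proof of Lemma~\ref{lem: Spectral sequence for nullification}), and your reduction to levelwise reflexivity is resolved precisely as you indicate, since each $\Hom_\ring(\nn^i,\ee)$ is then a summand of $\End_\ring(\ee)$, hence projective, so $\Hom_\ring(\nn,\ee)$ is a bounded-below complex of projectives and the derived double dual is computed termwise (compare Proposition~\ref{pro: resolution by fg projectives for nullification}). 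The one step you should spell out when writing this up is exactly that K-projectivity claim, since the passage from ``$\eta_{\nn^i}$ is an isomorphism for each $i$'' to ``$\eta_\nn$ is a quasi-isomorphism'' is not formal for the unbounded complex $\nn$ without it.
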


It will sometimes be necessary need to specify the injective module used in Theorem~\ref{the: Nullification as double endomorphism}. For that purpose we record one more result from~\cite{ShamirTorsion}.
\begin{proposition}[{\cite[Proposition 3.5]{ShamirTorsion}}]
\label{pro: resolution by fg projectives for nullification}
Let $\module$ be an $\ring$-module and let $\ee$ be an injective cogenerator of the hereditary torsion theory $\tc$. Denote by $\Qring$ the graded ring $\End_\ring(\ee)$. If the $\Qring$-module $\Hom_\ring(\module,\ee)$ has a resolution composed of finitely generated projective modules in each degree, then $\Hom_\Qring(\Hom_\ring(\module,\ee),\ee)$ is the $\tc$-nullification of $\module$.
\end{proposition}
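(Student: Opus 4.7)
The plan is to exploit the hypothesized finitely generated projective resolution to replace the (derived) double-Hom with an explicit cochain complex of torsion-free injective $\ring$-modules, and then to verify directly the two defining conditions of a nullification map (Proposition~\ref{pro: Equivalent conditions for Cell and Null}): that the target is $\tc$-null, and that the cone is $\tc$-cellular.

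Fix a resolution $P_\bullet \to \Hom_\ring(\module,\ee)$ by finitely generated projective $\Qring$-modules. Since derived Hom is computed by resolving in the first slot, $\Hom_\Qring(\Hom_\ring(\module,\ee),\ee)$ is represented by the cochain complex $\nn^\bullet := \Hom_\Qring(P_\bullet,\ee)$. Each $P_i$ is a direct summand of some $\Qring^{n_i}$, and $\Hom_\Qring(\Qring,\ee) \cong \ee$, so each $\nn^i$ is a direct summand of $\ee^{n_i}$. Being a direct summand of a finite sum of copies of the injective cogenerator $\ee$, each $\nn^i$ is both torsion-free (injective cogenerators of $\tc$ are torsion-free) and injective over $\ring$. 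Thus for every $T \in \tc$, $\Hom_\ring(T,\nn^i)=0$; moreover, since $\nn^\bullet$ is a bounded-below complex of injectives it is K-injective, so the derived Hom from $T$ agrees with the levelwise Hom and also vanishes. Hence $\nn$ is $\tc$-null.

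Next, form the cochain complex $\Ccomplex = [\module \to \nn^0 \to \nn^1 \to \cdots]$ (with $\module$ in degree $-1$), which up to a shift represents the cone of the natural evaluation map $\module \to \nn$. Apply the exact functor $\Hom_\ring(-,\ee)$ to $\Ccomplex$. Using the double-dual identity $\Hom_\ring(\Hom_\Qring(P,\ee),\ee) \cong P$ for any finitely generated projective right $\Qring$-module $P$ (which reduces to $P=\Qring$, where $\Hom_\Qring(\Qring,\ee)=\ee$ and $\Hom_\ring(\ee,\ee)=\Qring$, and then passes to finite sums and summands), one recovers up to signs the augmented resolution $\cdots \to P_1 \to P_0 \to \Hom_\ring(\module,\ee) \to 0$, which is exact by hypothesis. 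Exactness of $\Hom_\ring(-,\ee)$ then forces $\Hom_\ring(H^i(\Ccomplex),\ee)=0$ for every $i$; since $\ee$ is a cogenerator of $\tc$, every $H^i(\Ccomplex)$ is $\tc$-torsion. As $\Ccomplex$ is bounded above in the chain-complex convention, Lemma~\ref{lem: Bounded above chain comples is cellular} delivers that $\Ccomplex$ is $\tc$-cellular.

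Combining the two conclusions through Proposition~\ref{pro: Equivalent conditions for Cell and Null} identifies $\module \to \Hom_\Qring(\Hom_\ring(\module,\ee),\ee)$ as a $\tc$-nullification map of $\module$. The main technical obstacle is the double-dual identification: one has to view $\ee$ carefully as a $(\ring,\Qring)$-bimodule, check that the natural transformation $P \to \Hom_\ring(\Hom_\Qring(P,\ee),\ee)$ is an isomorphism for every finitely generated projective $P$, and then verify naturality so that the identification is compatible with the differentials of $P_\bullet$ and with the evaluation map from $\module$.
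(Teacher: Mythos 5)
Your proposal is correct and follows exactly the template the paper itself uses for the closely related Lemma~\ref{lem: Construction of tc-nullification} (and, as far as I can tell, the original argument in [ShamirTorsion]): realize the double dual as a bounded-above complex of torsion-free injectives via the finitely generated projective resolution, check that the co-cone $\Ccomplex$ has $\tc$-torsion cohomology by dualizing back with the exact functor $\Hom_\ring(-,\ee)$ and the biduality $P\cong\Hom_\ring(\Hom_\Qring(P,\ee),\ee)$ for finitely generated projectives, and conclude with Proposition~\ref{pro: Equivalent conditions for Cell and Null}. The bimodule/naturality verifications you flag at the end are routine triangle-identity checks and do not constitute a gap.
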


\section{Construction of the spectral sequences}
\label{sec: Construction of the spectral sequences}

We will, in fact, construct two spectral sequences. First we will construct a nullification spectral sequence and from it deduce a colocalization spectral sequence. But before constructing the spectral sequences we must take a short detour and review a certain adjunction.

Let $R$ be an $\sphere$-algebra and $E$ be an $R$-module. Denote by $\Endring$ the $\sphere$-algebra $\End_R(E)$. The module $E$ is in fact a left $R \otimes_{\sphere} \Endring$-module. There are two functors:
\[ \Hom_R(-,E):\Derived(R)^\mathrm{op} \to \Derived(\Endring) \quad \text{ and } \quad \Hom_\Endring(-,E):\Derived(\Endring)^\mathrm{op} \to \Derived(R).\]
That $\Hom_R(M,E)$ is an $\Endring$-module comes from the extra left $\Endring$-action on $E$ (this is implicit in \cite[III.6.5]{EKMM}). The exact same argument applies for the functor $\Hom_\Endring(-,E)$.

\begin{definition}
For an $R$-module $M$, the \emph{$E$-dual} of $M$ is the $\Endring$-module $\Hom_R(M,E)$. The \emph{$E$-double dual} of $M$ is the $R$-module
\[\Hom_\Endring(\Hom_R(M,E),E)\]
\end{definition}

\begin{lemma}
There is a natural transformation $1 \to \Hom_\Endring(\Hom_R(-,E),E)$.
\end{lemma}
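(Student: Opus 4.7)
The plan is to construct the natural transformation via the standard ``double-dual'' unit map, using the Hom-smash adjunction in the category of module spectra. Since $E$ is a left $R \otimes_\sphere \Endring$-module, for any $R$-module $M$ and any $\Endring$-module $N$, the smashed $R\otimes_\sphere \Endring$-module $M \otimes_\sphere N$ admits a derived adjunction isomorphism
\[
\Hom_{R\otimes_\sphere\Endring}(M\otimes_\sphere N,\,E)\ \simeq\ \Hom_R\!\bigl(M,\,\Hom_\Endring(N,E)\bigr)\ \simeq\ \Hom_\Endring\!\bigl(N,\,\Hom_R(M,E)\bigr).
\]
This is the usual two-variable Hom-tensor adjunction; in $\sphere$-algebras it follows from the enriched adjunction for the smash product recorded in \cite{EKMM}.

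First I would specialize to $N = \Hom_R(M,E)$. The two outer terms of the above chain of equivalences then both compute $\Hom_\Endring\!\bigl(\Hom_R(M,E),\Hom_R(M,E)\bigr)$, and I would define
\[
\eta_M : M \longrightarrow \Hom_\Endring\!\bigl(\Hom_R(M,E),\,E\bigr)
\]
as the image, under the adjunction, of the identity element of $\Hom_\Endring\!\bigl(\Hom_R(M,E),\Hom_R(M,E)\bigr)$. Concretely $\eta_M$ is the ``evaluation'' map sending $m$ to the $\Endring$-linear homomorphism $\phi \mapsto \phi(m)$.

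Next I would verify naturality: given a morphism $f:M\to M'$ of $R$-modules, the square
\[
\xymatrix{
M \ar[r]^-{\eta_M} \ar[d]_{f} & \Hom_\Endring(\Hom_R(M,E),E) \ar[d]^{\Hom_\Endring(f^*,E)} \\
M' \ar[r]^-{\eta_{M'}} & \Hom_\Endring(\Hom_R(M',E),E)
}
\]
commutes. Under the adjunction, this reduces to checking the identity of two maps $\Hom_R(M',E) \to \Hom_R(M,E)$, both equal to $f^* = \Hom_R(f,E)$; that is automatic from functoriality of the adjunction isomorphism in the variable $M$.

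I do not expect a genuine obstacle here: everything is formal once the bimodule structure on $E$ and the two-sided adjunction are in place. The one point that requires a little care is ensuring that all the function spectra are derived, so that the adjunction isomorphism is an honest equivalence in $\Derived(R)$ and $\Derived(\Endring)$ rather than only a point-set equality; this is handled by the standing convention, stated in the Setting section, that $\Hom_R(-,-)$ is implicitly derived.
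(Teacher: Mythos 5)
Your proposal is correct and follows essentially the same route as the paper: both establish the two-variable Hom-smash adjunction isomorphisms relating $\Hom_R(-,\Hom_\Endring(U,E))$ and $\Hom_\Endring(U,\Hom_R(-,E))$ and then obtain the natural transformation as the unit of the resulting adjunction between $\Derived(R)$ and $\Derived(\Endring)^{\mathrm{op}}$. Your explicit description of the unit as the image of the identity on $\Hom_R(M,E)$ and the naturality check are just the standard unpacking of what the paper leaves implicit.
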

\begin{proof}
Let $U$ be an $\Endring$-module, then:
\begin{align*}
\hom_R(M,\Hom_\Endring(U,E))&\cong \hom_{R\otimes_\sphere \Endring}(M\otimes_\sphere U,E) \\
&\cong \hom_{\Endring\otimes_\sphere R}(U\otimes_\sphere M,E)\\
&\cong \hom_\Endring(U,\Hom_R(M,E)).
\end{align*}
The first and last isomorphisms are the ones described in \cite[III.6.5(i)]{EKMM}. This leads to an adjunction:
\[ \xymatrixcompile{ {\Derived(R)} \ar@/_/[r]_F
            & {\Derived(\Endring)^{\text{op}}} \ar@/_/[l]_G }, \]
where $F$ is $\Hom_R(-,E)$ and $G$ is $\Hom_\Endring(-,E)$. The natural transformation mentioned above is simply the unit of this adjunction.
\end{proof}

Note that the image of the functor $\Hom_\Endring(-,E):\Derived(\Endring) \to \Derived(R)$ is contained in the colocalizing subcategory generated by $E$. The \emph{colocalizing subcategory} generated by $E$ is the smallest triangulated subcategory closed under retracts, isomorphisms, completion of triangles and products. Also note that if $E$ is $A$-null for some $R$-module $A$, then every object in the colocalizing subcategory generated by $E$ is $A$-null.

We are now ready to construct the nullification spectral sequence. Given a hereditary torsion class $\tc$ on $\pi_* R$-modules recall that an $R$-module $M$ is $\tc$-cellular if $\pi_*M \in \tc$ and that $M$ is $\tc$-null if $\Hom_R(C,M)\simeq 0$ for every $\tc$-cellular $R$-module $C$.

\begin{lemma}
\label{lem: Spectral sequence for nullification}
Let $R$ be an $\sphere$-algebra and let $\tc$ be a hereditary torsion class on $\pi_* R$-modules. For every $R$-module $M$ there exists a map $\nu:M \to N$ such that $N$ is a $\tc$-null $R$-module and there is a spectral sequence of $\pi_*R$-modules:
\[ E^2_{p,q} = H_{p,q}(\Null_\tc^{\pi_*R}\pi_*M) \ \Rightarrow \ \pi_{p+q}(N)\]
This spectral sequence has conditional convergence.
\end{lemma}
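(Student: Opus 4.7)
The strategy is to transport the algebraic double-endomorphism description of nullification from Theorem~\ref{the: Nullification as double endomorphism} into the derived category of $R$-modules. Applying that theorem to the $\pi_*R$-module $\pi_*M$ produces a torsion-free injective cogenerator $\ee$ of $\tc$; set $\Qring=\End_{\pi_*R}(\ee)$. The first step is to realize $\ee$ topologically: I would invoke Brown representability for $\Derived(R)$ on the cohomological, product-preserving functor $X \mapsto \Hom_{\pi_*R}(\pi_*X,\ee)$ (which is exact in $X$ because $\ee$ is injective over $\pi_*R$) to produce an $R$-module $E$ together with a natural isomorphism
\[
\pi_*\Hom_R(X,E) \;\cong\; \Hom_{\pi_*R}(\pi_*X,\ee)
\]
for every $X \in \Derived(R)$. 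Specializing to $X=R$ and $X=E$ identifies $\pi_*E$ with $\ee$ and $\pi_*\Endring$ with $\Qring$ as a graded ring, where $\Endring:=\End_R(E)$.

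Set $U=\Hom_R(M,E)$, so $\pi_*U \cong \Hom_{\pi_*R}(\pi_*M,\ee)$ as a $\Qring$-module, and define $N:=\Hom_\Endring(U,E)$ together with the unit map $M \to N$ from the adjunction of the preceding lemma. To see that $N$ is $\tc$-null, take any $C \in \Derived(R)$ with $\pi_*C \in \tc$: the adjunction gives $\hom_{\Derived(R)}^*(C,N)\cong \hom_{\Derived(\Endring)}^*(U,\Hom_R(C,E))$, while $\pi_*\Hom_R(C,E)\cong \Hom_{\pi_*R}(\pi_*C,\ee)=0$ because $\ee$ is torsion-free and $\pi_*C$ is $\tc$-torsion; hence $\Hom_R(C,E)\simeq 0$ in $\Derived(\Endring)$ and so $N$ is $\tc$-null.

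For the spectral sequence I would build an Adams-type resolution of $U$ over $\Endring$: a tower $\cdots \to U_2 \to U_1 \to U_0 = U$ with cofibre sequences $U_{s+1} \to U_s \to P_s$ in which each $P_s$ is a coproduct of (de)suspensions of $\Endring$ chosen so that $\pi_* P_\bullet$ is a free $\Qring$-resolution of $\pi_*U$. Applying $\Hom_\Endring(-,E)$ converts this into a tower of $R$-modules whose homotopy limit is $N$, and the associated Bousfield--Kan spectral sequence of $\pi_*R$-modules is conditionally convergent by construction of the tower. Its $E^2$-page is the cohomology of the cochain complex $\Hom_\Qring(\pi_*P_\bullet,\ee)$, which computes $\ext^{-p}_\Qring\bigl(\Hom_{\pi_*R}(\pi_*M,\ee),\ee\bigr)_q$, and by Theorem~\ref{the: Nullification as double endomorphism} this bigraded Ext is exactly $H_{p,q}(\Null_\tc^{\pi_*R}\pi_*M)$, giving the desired $E^2$-term and target $\pi_{p+q}(N)$.

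The main obstacle is the opening step: producing a topological realization $E$ of $\ee$ for which $\pi_*\Hom_R(-,E)$ really does compute $\Hom_{\pi_*R}(\pi_*(-),\ee)$, rather than merely an $R$-module with the correct homotopy groups. Once this Brown--Comenetz-style $E$ (and the induced ring isomorphism $\pi_*\Endring \cong \Qring$) is in hand, $\tc$-nullity of $N$ reduces to torsion-freeness of $\ee$, and the spectral sequence is a standard Adams/Bousfield--Kan resolution spectral sequence whose $E^2$-identification is immediate from Theorem~\ref{the: Nullification as double endomorphism}.
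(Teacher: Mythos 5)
Your proposal is correct and follows essentially the same route as the paper: realize the torsion-free injective $\ee$ from Theorem~\ref{the: Nullification as double endomorphism} as an $R$-module $E$ via Brown representability applied to $X\mapsto \Hom_{\pi_*R}(\pi_*X,\ee)$, take $N$ to be the $E$-double dual of $M$, and run the Ext/universal-coefficient spectral sequence for $\Hom_{\End_R(E)}(\Hom_R(M,E),E)$, whose $E^2$-page is identified with $H_{*,*}(\Null_\tc^{\pi_*R}\pi_*M)$ by that same theorem. The paper simply cites the EKMM construction of this spectral sequence where you build the Adams-type tower by hand, and it additionally fixes the specific choice $\ee=\prod_i\nn^i$ (needed only for the subsequent colocalization lemma, not here).
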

\begin{proof}
Let $\nn$ be the complex described in the Nullification Construction~\ref{con: Nullification Construction} for the $\pi_*R$-module $\pi_*M$. Let $\ee$ be an appropriate injective $\pi_*R$-module as in Theorem~\ref{the: Nullification as double endomorphism}. To be specific, we choose $\ee$ to be the product $\prod_i \nn^i$. It follows from~\cite{ShamirTorsion} that $\ee$ is an appropriate injective, i.e.
\[\Null_\tc^{\pi_*R}\pi_*M \simeq \Hom_{\End_{\pi_*R}(\ee)}(\Hom_{\pi_*R}(\pi_*M,\ee),\ee)\]

Brown representability implies that there exists an $R$-module $E$ such that there is a natural isomorphism $\pi_*\Hom_R(X,E) \cong \Hom_{\pi_*R} (\pi_*X,\ee)$ for every $R$-module $X$. Let $N$ be the $E$-double dual of $M$, i.e.
\[ N=\Hom_{\End_R(E)}(\Hom_R(M,E),E).\]
The $R$-module $E$ is certainly $\tc$-null and since $N$ is in the colocalizing subcategory generated by $E$, $N$ is also be $\tc$-null.

In~\cite[IV.4]{EKMM}, a spectral sequence for calculation of $\ext_{\End_R(E)}(\Hom_R(M,E),E)$ is presented. This spectral sequence has the form:
\[ E_2^{p,q}=\ext_{\pi^*\End_R(E)}^{p,q}(\pi^*\Hom_R(M,E),\pi^*E) \ \Rightarrow \ \pi^{p+q}N\]
The isomorphisms:
\[\End_R(E)_* \cong \End_\ring(\ee,\ee), \ \Hom_R(M,E)_*\cong \Hom_\ring(M_*,\ee)\text{ and } E_*\cong \ee\]
complete the proof.
\end{proof}

\begin{remark}
\label{rem: Generality of the spectral sequence}
The use of the spectral sequence from~\cite[IV.4]{EKMM} is just a convenient short cut. We could just as well have lifted the complex of torsion-free injectives constructed in Nullification Construction~\ref{con: Nullification Construction} to a filtration of $M$. This process would result in the same spectral sequence as that of~\cite[IV.4]{EKMM}. It follows that Lemma~\ref{lem: Spectral sequence for nullification} above holds with $\Derived(R)$ replaced by any triangulated category with a compact generator, as do other results in this paper.
\end{remark}

We shall see that any reasonably constructed nullification spectral sequence would induce a colocalization spectral sequence. But we must qualify the term ``reasonable'' first.
\begin{definition}
\label{def: Reasonable nullification spectral sequence}
Let $M$ be an $R$-module and let $\tc$ be a hereditary torsion class on $\pi_* R$-modules. A \emph{reasonable} $\tc$-nullification spectral sequence for $M$ consists of the following data:
\begin{enumerate}
\item a $\tc$-null module $E^0$ and a map $\nu:M \to E^0$, the module $E^0$ will be called the \emph{nullification candidate}.
\item Distinguished triangles $\Sigma^{-p} N^p \xleftarrow{k^p} E^p \xleftarrow{i^p} E^{p+1} \xleftarrow{j^{p+1}} \Sigma^{-p-1} N^p$.
\end{enumerate}
The $\Espec^1$-term of the resulting spectral sequence induced by the tower $E^0 \leftarrow E^1 \leftarrow\cdots$ is the cochain complex of $\pi_*R$-modules: $\pi_*N^0 \xrightarrow{k^1j^0} \pi_*N^1 \xrightarrow{k^2j^1} \pi_*N^2 \to \cdots$. We require that the map of $\pi_*R$-complexes $\pi_*M \to \Espec^1$ (induced by $M \xrightarrow{\nu} E^0 \xrightarrow{k^0} N^0$) be a $\tc$-nullification of $\pi_*M$. This in particular implies that the $\Espec^2_{p,q}$-term is $H_{p,q} (\Null_{\tc}^{\pi_*R} \pi_*M)$.
\end{definition}

\begin{lemma}
\label{lem: Colocalization spectral sequence induced by nullification}
Let $R$, $\tc$ and $M$ be as in Lemma~\ref{lem: Spectral sequence for nullification} above. Given a reasonable $\tc$-nullfication spectral sequence for $M$ there exists an induced $\tc$-colocalization spectral sequence for $M$. If the nullfication spectral sequence conditionally converges to the homotopy groups of the nullification candidate $E^0$ then the colocalization spectral sequence conditionally converges to the homotopy groups of a module $C^0$ and there is a distinguished triangle $C^0 \to M \xrightarrow{\nu} E^0$.
\end{lemma}
\begin{proof}
Let $C^0$ be the homotopy fiber of $M \to E^0$. For $p>0$ set $C^p=\Sigma^{-1}E^{p-1}$. Thus there are triangles
\[ M \xleftarrow{k^{-1}} C^0 \xleftarrow{i^{-1}} C^1 \xleftarrow{j^0} \Sigma^{-1} M \quad \text{and } \]
\[\Sigma^{-p-1}N^{p-1} \leftarrow C^p \leftarrow C^{p+1} \leftarrow \Sigma^{-p-2} N^{p-1} \quad \text{ for }p>0. \]

Define $\Dspec^1_{p,q}=\pi_{p+q}(C^{-p})$, $\Espec^1_{0,q}=\pi_{q}(M)$ and $\Espec^1_{p,q}=\pi_{q}(E^{-p-1})$ for $p<0$. The distinguished triangles of Definition~\ref{def: Reasonable nullification spectral sequence} yield the following maps:
\begin{align*}
i &: \Dspec^1_{p,q} \to \Dspec^1_{p+1,q-1} \\
j &: \Espec^1_{p,q} \to \Dspec^1_{p-1,q}\\
k &: \Dspec^1_{p,q} \to \Espec^1_{p,q}
\end{align*}
which make $\Dspec$ and $\Espec$ into an exact couple. It is a simple matter to see that $\Espec^1_{-p,*}$ with the map $\delta=k\circ j$ is the cochain-complex:
\[ \pi_*M \to \pi_*N^0 \to \pi_*N^1 \to \cdots \to \pi_*N^{p-1} \xrightarrow{\delta} \pi_*N^p \to \cdots\]
This cochain complex is clearly $\cell_\tc^{\pi_*R} \pi_*M$, thus yielding the desired $\Espec^2$-page.

To show conditional convergence we consider the following triangle of towers:
\[ \xymatrixcompile{
{C^0} \ar[d] & \ar[l] {C^1} \ar[d] & \ar[l] {C^2} \ar[d] & \ar[l] \cdots \\
{M}   \ar[d] & \ar[l]  {0}  \ar[d] & \ar[l]  {0}  \ar[d] & \ar[l] \cdots \\
{E^0}        & \ar[l]^{=} {E^0}    & \ar[l] {E^1}        & \ar[l] \cdots } \]
The homotopy limit of the bottom tower is equivalent to the homotopy limit of the tower:
\[ E^0 \leftarrow E^1 \leftarrow E^2 \leftarrow \cdots \]
which is zero since we assume conditional convergence of the nullification spectral sequence. The homotopy limit of the middle tower is clearly zero, hence the homotopy limit of the top tower is also zero. As in~\cite[IV.5]{EKMM}, this implies that:
\[ \mathrm{lim}(\Dspec^1_{0,*} \leftarrow \Dspec^1_{-1,*} \leftarrow \cdots ) =0\]
and
\[ \mathrm{lim}^1(\Dspec^1_{0,*} \leftarrow \Dspec^1_{-1,*} \leftarrow \cdots ) =0,\]
resulting in conditional convergence as defined in the work of Boardman~\cite{BoardmanCCSpecSeq}.
\end{proof}

\begin{corollary}
\label{cor: The colocalization spectral sequence}
Let $R$, $\tc$, $M$ and $N$ be as in Lemma~\ref{lem: Spectral sequence for nullification} above and let $C$ be the homotopy fiber of the map $\nu:M \to N$. Then the spectral sequence constructed in Lemma~\ref{lem: Spectral sequence for nullification} is a reasonable nullification spectral sequence and hence induces a colocalization spectral sequence
\[ \Espec^2_{p,q} = H_{p,q}(\cell_\tc^{\pi_*R}\pi_*M) \ \Rightarrow \ \pi_{p+q}(C)\]
which has conditional convergence.
\end{corollary}
\begin{proof}
As before, let $\nn$ is the complex described in the Nullification Construction~\ref{con: Nullification Construction} for the $\pi_*R$-module $\pi_*M$. Let $N^p$ be the lift of $\nn^p$ given by brown representability and let $E$ be the product $\prod_i N^p$.

Following the details of the construction of the $\ext$-spectral sequence in~\cite[IV.5]{EKMM} we see there are triangles:
\[ \Sigma^{-p} N^p \xleftarrow{k^p} E^p \xleftarrow{i^p} E^{p+1} \xleftarrow{j^{p+1}} \Sigma^{-p-1} N^p\]
for $p\geq0$. These fiber sequences satisfy the following properties:
\begin{enumerate}
\item $\pi_*(N^p)\cong \nn^p$.
\item $E^0 \simeq \Hom_{\Endring}(\Hom_R(M,E),E)$ - this is the module $N$ from Lemma~\ref{lem: Spectral sequence for nullification} above.
\item The composition $k^{p+1} \circ j^p$ realizes the differential $\delta^{p+1}$.
\item The homotopy limit of the tower $E^0 \leftarrow E^1 \leftarrow \cdots$ is zero (\cite[IV.5]{EKMM}).
\item The composition $M \to E^0 \to N^0$ realizes the map $\pi_*M \to \nn^0$ in the Nullification Construction~\ref{con: Nullification Construction}.
\end{enumerate}
The only property needing explanation here is the last one. In choosing the specific module $E$ above we have ensured that $N^0$ is equivalent to its double dual, simply because $N^0$ is a retract of $E$. Now, let $f:M \to N^0$ be such that $\pi_*f:\pi_*M \to \nn^0$ is the morphism described in the Nullification Construction~\ref{con: Nullification Construction}. Then applying the $E$-double dual functor to $f$ yields a commutative diagram:
\[ \xymatrixcompile{ {M} \ar[r]^f \ar[d] & {N^0} \ar[d]^\simeq \\ {N} \ar[r] & {\Hom_{\Endring}(\Hom_R(N^0,E),E)} }\]
which proves the last property. Lemma~\ref{lem: Colocalization spectral sequence induced by nullification} completes the proof.
\end{proof}

\section{Proofs of Theorems \ref{thm: First Theorem}, \ref{thm: Second Theorem} and \ref{thm: Third Theorem}}
\label{sec: Proofs}

\begin{proof}[Proof of Theorem~\ref{thm: First Theorem}]
To obtain a spectral sequence $\Espec^r_{p,q}$ with the correct $\Espec^2$-page we need only invoke Corollary~\ref{cor: The colocalization spectral sequence}. Hence there is a triangle $C \to M \to N$, with $N$ being $\tc$-null and the spectral sequence of Corollary~\ref{cor: The colocalization spectral sequence} converges conditionally to $\pi_*C$.

Now suppose the $\pi_*R$-complex $\cell_\tc^{\pi_*R}(\pi_*M)$ is bounded below. This immediately implies that the spectral sequence has strong convergence \cite[Theorem 7.1]{BoardmanCCSpecSeq}. It remains to show that $C$ is $\cell_\tc M$. By Proposition~\ref{pro: Equivalent conditions for Cell and Null}, it is enough to show that $C$ is $\tc$-cellular. Equivalently, we must show that $\pi_*C$ is $\tc$-cellular.

By Proposition~\ref{pro: Bounded above is Cellular}, for every $p$ the $\pi_*R$-module $\Espec^2_{p,*}$ is $\tc$-torsion. Hence $E^r_{p,*}$ is $\tc$-torsion for every $p$ and $r$, because $\tc$ is closed under kernels and cokernels. Since $\cell_\tc^{\pi_*R} (\pi_*M)$ is bounded below, the spectral sequence collapses for some $r$. We see that $\pi_*C$ has a finite filtration whose successive quotients are $\tc$-torsion. Because $\tc$ is also closed under extensions, $\pi_*C$ is $\tc$-torsion.
\end{proof}

\begin{proof}[Proof of Theorem~\ref{thm: Second Theorem}]
It is enough to show that the condition of Theorem~\ref{thm: First Theorem} is satisfied. Namely that the $\pi_*R$-complex $\cell_\tc^{\pi_*R} (\pi_*M)$ is bounded-below. If $\pi_*R$ is graded-commutative then this is a result of Dwyer and Greenlees~\cite[Proposition 6.10]{DwyerGreenlees}. If $\pi_*R$ is left Noetherian this follows from~\cite{ShamirGradedColocalization}. 
\end{proof}

\begin{proof}[Proof of Theorem~\ref{thm: Third Theorem}]
It should be pointed out that, since $\pi_*R$ is left Noetherian, $\ideal$ is finitely generated as a left $\pi_*R$-module. This implies that the class of $\ideal$-power torsion modules $\tc$ is a hereditary torsion class, see~\cite[VI.6.10]{Stenstrom}. It also implies that $\cell_\tc^{\pi_*R}(\pi_*M) \simeq \cell_k^{\pi_*R}(\pi_*M)$, by Lemma~\ref{lem: Cellularization at R/I is I-power torsion when}.

As in the proof of Theorem~\ref{thm: First Theorem}, we obtain the desired spectral sequence $\Espec^r_{p,q}$ by invoking Corollary~\ref{cor: The colocalization spectral sequence}. Thus there is a triangle $C \to M \to N$, with $N$ being $\tc$-null and the colocalization spectral sequence of  Corollary~\ref{cor: The colocalization spectral sequence} converges conditionally to $\pi_*C$. By the results of~\cite{ShamirGradedColocalization}
there exists $q_0$ such that $\Espec^2_{p,q}=0$ for all $q>q_0$. Strong convergence now follows from Boardman \cite[Theorem 7.1]{BoardmanCCSpecSeq}.

Our next task is to show that $C$ is $\cell_\tc^R M$. As in the proof of Theorem~\ref{thm: First Theorem}, the $\pi_*R$-modules $E^r_{p,*}$ are $\tc$-torsion for every $r$. Note that $E^r_{p,q}$ is a $\pi_*R$-module that is a sub-quotient of $E^r_{p,*}$. For example $E^r_{p,q_0}$ is a submodule of $E^r_{p,*}$ and $E^r_{p,q_0-1}$ is a submodule of the quotient $E^r_{p,*}/E^r_{p,q_0}$. Hence $E^r_{p,q}$ is $\tc$-torsion for all $r$,$p$ and $q$.

It is clear that for every $n$ there exists $r(n)$ such that $E^{r(n)}_{p,q}=E^\infty_{p,q}$ whenever $p+q=n$. Moreover, there are only finitely many pairs $(p,q)$ such that $p+q=n$ and $E^{r(n)}_{p,q}\neq 0$. Hence, $\pi_n C$ has a finite filtration whose successive quotients are $\tc$-torsion. We conclude that $\pi_n C$ is $\tc$-torsion for all $n$.

By Lemma~\ref{lem: Cellularization at R/I is I-power torsion when}, $\pi_n C$ is $k$-cellular as a $\pi_* R$-module. We claim this implies $\pi_n C$ is $k$-cellular also as a $\pi_0 R$-module. Consider the map of graded algebras $f:\pi_0 R\to \pi_*R$. We know that $\pi_n(C)$ is $k$-cellular over $\pi_*R$, this can be pulled back along $f$ to show that $\pi_n(C)$ is $k$-cellular over $\pi_0R$.

We see that the Eilenberg-Mac Lane spectrum $H(\pi_n C)$ is an $Hk$-cellular $H\pi_0 R$-module. This can be pulled back along the map $R \to H\pi_0 R$ to show that $H(\pi_n C)$ is $Hk$-cellular as an $R$-module. Hence, by Lemma~\ref{lem: Bounded above module spectrum is cellular pi_0-cellular}, $C$ is $Hk$-cellular. Since $Hk$ is $\tc$-cellular then $N$ is $Hk$-null and $C$ is $\tc$-cellular. Using Proposition~\ref{pro: Equivalent conditions for Cell and Null} we conclude that $C$ is both $\cell_{Hk}^R M$ and $\cell_\tc^R M$.
\end{proof}

\section{Universality of the Nullification Spectral Sequence}
\label{sec: Universality of the Nullfication Spectral Sequence}

In this section we show that the nullification spectral sequence constructed in Lemma~\ref{lem: Construction of tc-nullification} is, in an appropriate sense, the only one possible. As before $R$ is an $\sphere$-algebra and $\tc$ is a hereditary torsion class on $\pi_*R$-modules.

\begin{definition}
\label{def: Proper nullification spectral sequence}
Let $M$ be an $R$-module. A \emph{proper} $\tc$-nullification spectral sequence for $M$ is a reasonable nullification spectral sequence such that the map $\nu:M \to E^0$ in $\tc$-nullification of $M$. In particular this implies that the nullification candidate is in fact the $\tc$-nullification of $M$.
\end{definition}

\begin{theorem}
\label{thm: Universality for nullification}
Let $R$ be an $\sphere$-algebra and let $\tc$ be a hereditary torsion class on $\pi_* R$-modules. Suppose given an $R$-module $M$ and a proper $\tc$-nullification spectral sequence for $M$. Then this spectral sequence is isomorphic to the nullification spectral sequence constructed in Lemma~\ref{lem: Spectral sequence for nullification}.

\end{theorem}
\begin{proof}
Before we begin we must set some notation. The proper spectral sequence will consist of a $\tc$-nullification map $\nu:M \to F^0$ and maps
\[ \xymatrixcompile{
{\cdots} \ar[r] & {F^2} \ar[rr] && {F^1} \ar[rr] \ar[dl]^k && {F^0} \ar[dl]^k & {M} \ar[l]^\nu \\
   &   &  {\Sigma^{-1} G^1}  \ar@{-->}[ul]^j  &  & {G^0} \ar@{-->}[ul]^j
}\]
where the dashed arrows shift degrees and each triangle $F^{p+1} \to F^p \to \Sigma^{-p} G^p$ is distinguished. The $\Espec^1$-term is then the cochain complex $\pi_* G^0 \xrightarrow{\delta} \pi_* G^1 \xrightarrow{\delta} \cdots$
where $\delta=\pi_*(kj)$. Let $\Bmodule^n = \delta(\pi_*G^n)$ and $\Zmodule^n= \ker(\delta:\pi_*G^n \to \pi_*G^{n+1})$.

Now let $N$ be any $R$-module such that $\pi_* N$ is torsion free injective, we show that there are isomorphisms
\[ \ext^0_R(F^n,N) \xleftarrow{\cong} \hom_{\pi_*R}(\Zmodule^{n}, \pi_* N) \xrightarrow{\cong} \hom_{\pi_*R}(\Bmodule^{n}, \pi_* N) \]
for $n\geq 1$ and
\[ \ext^0_R(F^0,N) \xleftarrow{\cong} \hom_{\pi_*R}(\Zmodule^0,\pi_* N)\]
induced by the obvious maps. First, $N$ is $\tc$-null and therefore $\nu$ induces an isomorphism $\ext^0_R(F^0,N) \cong \ext^0_R(M,N)$. Since the kernel and cokernel of the map $\pi_*M \to \Zmodule^0$ are torsion, we see that $\ext^0_R(M,N) \cong \hom_{\pi_*R}(\Zmodule^0,\pi_* N)$. Next, assuming the isomorphism $\ext^0_R(F^n,N) \cong \hom_{\pi_*R}(\Zmodule^{n}, \pi_* N)$, the short exact sequence $\Zmodule^n \to \pi_*G^n \to \Bmodule^{n+1}$ implies that $\ext^0_R(F^{n+1},N) \cong \hom_{\pi_*R}(\Bmodule^{n+1}, \pi_* N)$. Last, note that for $n>0$ the cohomology groups $\Zmodule^n/\Bmodule^n$ are $\tc$-torsion (since these are also cohomology groups of the colocalization) and therefore $\hom_{\pi_*R}(\Bmodule^{n}, \pi_* N) \cong \hom_{\pi_*R}(\Zmodule^{n}, \pi_* N)$.

Recall that in Lemma~\ref{lem: Spectral sequence for nullification} we constructed a map $M \to N$ such that the module $N$ is $\tc$-null, in what follows we shall use $E^0$ to denote $N$. We are therefore faced with the following diagram which the describes the filtration inducing the nullification spectral sequence of Lemma~\ref{lem: Spectral sequence for nullification}:
\[ \xymatrixcompile{
{\cdots} \ar[r] & {E^2} \ar[rr] && {E^1} \ar[rr] \ar[dl]^{\bar{k}} && {E^0} \ar[dl]^{\bar{k}} & {M} \ar[l]^\psi \\
   &   &  {\Sigma^{-1} N^1}  \ar@{-->}[ul]^{\bar{j}}  &  & {N^0} \ar@{-->}[ul]^{\bar{j}}
}\]
As above the dashed arrows shift degrees and each triangle is distinguished. Denote the spectral sequence induced by this structure by $\bar{\Espec}$. To prove the theorem we will construct in $\Derived(R)$ a morphism of towers $\alpha^p:F^p \to E^p$  which will induce a quasi-isomorphism $b:\Espec^1 \to \bar{\Espec}^1$.


Since $E^0$ is $\tc$-null there is a unique morphism $\alpha^0:F^0 \to E^0$ such that $\alpha^0 \nu = \psi$. The isomorphism $\ext^0_R(F^0,N^0) \cong \hom_{\pi_*R}(\Zmodule^0,\pi_* N^0)$, coupled with the fact that $\pi_*N^0$ is injective, implies that the map $\ext^0_R(G^0,N^0) \to \ext^0_R(F^0,N^0)$ is a surjection. Choose $\beta^0:G^0 \to N^0$ which satisfies $k\beta^0=\bar{k}\alpha^0:F^0 \to N^0$. The axioms of a triangulated category now produce a morphism $\alpha^1:F^1 \to E^1$ making the following diagram commute:
\[ \xymatrixcompile{
{F^1} \ar[r] \ar[d]^{\alpha^1} & {F^0} \ar[r] \ar[d]^{\alpha^0} & {G^0} \ar[r] \ar[d]^{\beta^0} & {\Sigma F^1} \ar[d]^{\Sigma \alpha^1} \\
{E^1} \ar[r] & {E^0} \ar[r] & {N^0} \ar[r] & {\Sigma E^1}
}\]
One readily sees how to continue this process inductively to get a morphism of towers $\alpha$.

The crucial property of the map of complexes $b:\Espec^1 \to \bar{\Espec}^1$ induced by $\beta$ is that $b \pi_*\nu = \pi_* \psi:\pi_*M \to \bar{\Espec}^1$, as is evident from the construction. Since both $\pi_*\nu$ and $\pi_* \psi$ are $\tc$-nullifications of $\pi_*M$ in $\Derived(\pi_*R)$, the morphism $b$ must be a quasi-isomorphism.
\end{proof}

\begin{corollary}
Let $R$ be a commutative $\sphere$-algebra and let $\ideal$ be a finitely generated ideal of $\pi_*R$. Then for any $R$-module $M$ the Greenlees local-cohomology spectral sequence for $M$ is isomorphic to colocalization spectral sequence of Corollary~\ref{cor: The colocalization spectral sequence}.
\end{corollary}
\begin{proof}
Let $\tc$ be the hereditary torsion theory generated by $\ideal$-power torsion modules. It is clear from~\cite{GreenleesMay} that the Greenlees spectral sequence is induced by a proper $\tc$-nullfication spectral sequence for $M$, which can be thought of as a \u{C}ech homology spectral sequence. Since the $\tc$-nullification spectral sequence of Lemma~\ref{lem: Spectral sequence for nullification} is isomorphic to the \u{C}ech homology spectral sequence, the result follows.
\end{proof}

\section{Application: the chains of loops on an elliptic space}
\label{sec: Application: the chains of loops on an elliptic space}

Elliptic spaces, defined by Felix, Halperin and Thomas in~\cite{FelixHalperinThomasEliiptic1}, have a particularly well behaved loop space homology. This enables us to apply Theorem~\ref{thm: Second Theorem} to modules over the $\sphere$-algebra $\chains_*(\kangr X;H\Int)$, where $X$ is an elliptic space. We begin by recalling the definition of an elliptic space from~\cite{FelixHalperinThomasEliiptic1}.

\begin{definition}
\label{def: Elliptic space}
Let $k$ be a sub-ring of $\Q$. A simply connected CW complex $X$ is called \emph{$k$-elliptic} if
\begin{enumerate}
\item $X_k$ has a finite Lusternik-Schnirelmann category, where $X_k$ is the $k$-localization of $X$.
\item Each $H_i(X;k)$ is finitely generated.
\item For $\field=\Q$ or $\field=\Int/p$ for prime $p$, the sequence $\dim_\field H_i(X_k;\field)$ grows at most polynomially.
\end{enumerate}
If $X$ is $\Int$-elliptic we will say that $X$ is \emph{elliptic}.
\end{definition}

\begin{remark}
Note that by~\cite{FelixHalperinThomasEliiptic1}, a $k$-elliptic space $X$ has the $k$-homotopy type of a finite complex. So, for the applications we have in mind we may as well assume that $X$ is a finite complex. Moreover, by~\cite[Theorem A]{FelixHalperinThomasEliiptic1}, if $X$ is $k$-elliptic then $H^*(X;k)$ satisfies Poincar\'{e} duality and if $X$ is elliptic then $X$ has the homotopy type of a finite Poincar\'{e} complex.
\end{remark}

\begin{theorem}
\label{the: Application to the chains of loops of elliptic}
Let $X$ be a elliptic space and let $R$ be the $\sphere$-algebra $\chains_*(\kangr X;H\Int)$. Suppose that $M$ is an $R$-module such that $\pi_*M$ is a finitely generated $\pi_*R$-module. Then there is a strongly convergent colocalization spectral sequence:
\[ \Espec^2_{p,q}= H_{p,q}(\cell_\Int^{\pi_*R} \pi_*M) \ \Rightarrow \ \pi_{p+q} (\cell_{H\Int}^R M)\]
In addition, for $M=R$ we have:
\[ \Espec^2_{p,q}= H_{p,q}(\cell_\Int^{\pi_*R} \pi_*R) \ \Rightarrow \ H^{a-p-q} (\kangr X;\Int)\]
where $a$ is the Poincar\'{e} duality dimension of $X$.
\end{theorem}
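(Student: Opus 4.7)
The plan is to apply Theorem~\ref{thm: Second Theorem} with $\ideal$ the augmentation ideal of $\pi_*R=H_*(\kangr X;\Int)$, and then to recognize the answer in terms of $H\Int$-cellularization and, for the $M=R$ case, Poincar\'e duality on $X$.

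First I would verify the hypotheses of Theorem~\ref{thm: Second Theorem}. Since $X$ is simply connected, $\pi_*R$ is graded-connected with $\pi_0R=\Int$, so the augmentation ideal $\ideal$ is well defined. The substantive point is that, for $X$ elliptic, $\pi_*R$ is left Noetherian and $\ideal$ is almost-commutative. This is what Appendix~\ref{app: The homology of loops on an elliptic space} is designed to provide: rationally, $H_*(\kangr X;\Q)$ is the universal enveloping algebra of a finite-dimensional graded Lie algebra $\mathfrak{g}$ (Felix--Halperin--Thomas), and because $\mathfrak{g}$ is positively graded and finite-dimensional it is nilpotent, giving an ordering of a basis for which the corresponding generators of $\ideal$ are central modulo the previous ones. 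Showing that this structure descends well enough over $\Int$ to give a finite almost-commutative sequence of generators for $\ideal$ and left Noetherianness of $\pi_*R$ is the main work of the appendix.

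Once these hypotheses are in place, Theorem~\ref{thm: Second Theorem} supplies a strongly convergent spectral sequence converging to $\pi_{p+q}(\cell_\tc^R M)$ with the desired $\Espec^2$-term, and Lemma~\ref{lem: Cellularization at R/I is I-power torsion when} rewrites $\cell_\tc^{\pi_*R}\pi_*M$ as $\cell_\Int^{\pi_*R}\pi_*M$. To replace $\cell_\tc^R M$ on the abutment by $\cell_{H\Int}^R M$ I would argue as in the final steps of the proof of Theorem~\ref{thm: Third Theorem}: strong convergence together with closure of $\tc$ under subquotients and extensions forces $\pi_*(\cell_\tc^R M)$ to be $\ideal$-power torsion, hence $\Int$-cellular as a $\pi_*R$-module; finite generation of $\pi_*M$ and the Noetherianness from the appendix then give enough boundedness to invoke Lemma~\ref{lem: Bounded above module spectrum is cellular pi_0-cellular} after pulling back along the augmentation $R\to H\Int$, yielding $\cell_\tc^R M \simeq \cell_{H\Int}^R M$.

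For the special case $M=R$, the remaining task is to identify $\pi_n(\cell_{H\Int}^R R)$ with $H^{a-n}(\kangr X;\Int)$. Because $X$ is a finite Poincar\'e duality complex of dimension $a$, the $\sphere$-algebra $R=\chains_*(\kangr X;\Int)$ is Gorenstein with dualizing shift $a$; concretely, $\Hom_R(H\Int,R)\simeq \Sigma^{-a}H\Int$ in $\Derived(R)$. Feeding this into the double-dual description of $\cell_{H\Int}^R R$ from Theorem~\ref{the: Nullification as double endomorphism}, or equivalently into the explicit tower used in Lemma~\ref{lem: Spectral sequence for cellularization}, identifies $\cell_{H\Int}^R R$ with $\Sigma^a\Hom_\Int(R,\Int)=\Sigma^a \chains^*(\kangr X;\Int)$, whose $\pi_n$ is $H^{a-n}(\kangr X;\Int)$. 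I expect the hardest part of the whole argument to be the appendix calculation establishing Noetherianness and almost-commutativity of $\pi_*R$; the Gorenstein/Poincar\'e duality identification in the $M=R$ case is the next most delicate ingredient.
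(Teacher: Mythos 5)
Your proposal is correct and follows essentially the same route as the paper's proof: Proposition~\ref{pro: Homology of loops on elliptic is almost commutative} supplies the hypotheses of Theorem~\ref{thm: Second Theorem}, Lemma~\ref{lem: Cellularization at R/I is I-power torsion when} rewrites the $\Espec^2$-page, the boundedness results of Appendix~\ref{app: Colocalization in graded rings} (this is exactly where the finite-generation hypothesis on $\pi_*M$ enters, via Lemma~\ref{lem: Bounded cohomology modules of cellular approximation}) combined with Lemma~\ref{lem: Bounded above module spectrum is cellular pi_0-cellular} upgrade $\cell_\tc^R M$ to $\cell_{H\Int}^R M$, and the $M=R$ case rests on the Gorenstein/Poincar\'e duality machinery of Dwyer--Greenlees--Iyengar. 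The only cosmetic difference is that the paper phrases the last step as $\cell_{H\Int}R\simeq \Sigma^{-a}H\Int\otimes_{\chains^*(X;\Int)}H\Int$ followed by the Eilenberg--Moore identification of the right-hand factor with $\chains^*(\kangr X;\Int)$, rather than your direct appeal to Gorenstein duality; both rest on the same ingredients.
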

\begin{proof}
From~\cite{ShamirGradedColocalization} it follows that $R$ satisfies the conditions of Theorem~\ref{thm: Second Theorem}. Thus, there is a strongly convergent spectral sequence
\[ \Espec^2_{p,q}= H_{p,q}(\cell_{\tc}^{R_*} \pi_*(M)) \ \Rightarrow \ \pi_{p+q}(\cell_\tc^R M)\]
where $\tc$ is the hereditary torsion class of $\ideal$-power torsion modules and $\ideal=H_+(\kangr X)$. Lemma~\ref{lem: Cellularization at R/I is I-power torsion when} shows that $\tc$-colocalization is the same as $\Int$-colocalization. Thus, we can interpret the $\Espec^2$-page of this spectral sequence as
\[ \Espec^2_{p,q}= H_{p,q}(\cell_\Int^{\pi_*R} \pi_*M)\]

The results of~\cite{ShamirGradedColocalization} show that there exists $q_0$ such that $\Espec^2_{p,q}=0$ for all $q>q_0$. Therefore $\cell_\tc M$ is bounded-above. Since $\pi_0 R=\Int$, then Lemma~\ref{lem: Bounded above module spectrum is cellular pi_0-cellular} implies that $\cell_\tc M$ is $H\Int$-cellular. Because the Eilenberg-Mac Lane spectrum $H\Int$ is a $\tc$-cellular $R$-module, then every $\tc$-equivalence is also an $H\Int$-equivalence. We conclude that the $\tc$-colocalization $\cell_\tc M \to M$ is also an $H\Int$-colocalization. Thus we have shown the first spectral sequence.

To complete the proof we need only show that $\cell_{H\Int}M \simeq \Sigma^{-a} \chains^*(\kangr X;\Int)$. Now \cite[10.4 \& 8.2]{DwyerGreenleesIyengar} show that
\[ \cell_{H\Int}R \simeq \Sigma^{-a}H\Int \otimes_{\chains^*(X;\Int)} H\Int\]
The Eilenberg-Moore spectral sequence (see also \cite[7.6]{DwyerGreenleesIyengar}) implies that $H\Int \otimes_{\chains^*(X;H\Int)} H\Int \simeq \chains^*(\kangr X;H\Int)$, which completes the proof.
\end{proof}

It is a simple matter to show that Theorem~\ref{the: Application to the chains of loops of elliptic} applies also when working over a field. However, this is not very interesting as we now explain. Let $k=\Int/p$, let $X$ be a $k$-elliptic space and let $R=\chains_*(X;k)$. Then the results of Felix, Halperin and Thomas from~\cite{FelixHalperinThomasHopfAlgebraElliptic} immediately show there exists $p_0$ such that
\[ H_{p,*}(\cell_k^{\pi_*R} \pi_*R)=\left\{
                                      \begin{array}{ll}
                                        H^{-a-p-*}(\kangr x;k), & p=p_0\hbox{;} \\
                                        0, & \hbox{otherwise.}
                                      \end{array}
                                    \right.\]
A similar result applies when $k=\Q$, this again follows from~\cite{FelixHalperinThomasHopfAlgebraElliptic}.

However, this has the following interesting consequence. Let $X$ be an elliptic space and let $\ring=H_*(\kangr X;H\Int)$. It follows that $\ring\otimes\Q=H_*(\kangr X;\Q)$, let us denote $\ring\otimes\Q$ by $\Qring$. It is not difficult to show there is an equivalence of $\Qring$-complexes
\[\Q \otimes \cell_\Int^\ring (\ring) \simeq \cell_\Q^\Qring \Qring \]
Which proves the following result:
\begin{proposition}
Let $X$ be an elliptic space and let $\ring$ be the graded ring $H_*(\kangr X)$. Then there exists $p_0$ such that $H_{p,q}(\cell_\Int^\ring \ring)\otimes \Q \neq 0$ implies $p=p_0$.
\end{proposition}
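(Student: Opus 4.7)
The plan is to combine two inputs that have just been recorded in the paper: the base change equivalence of $\Qring$-complexes
\[ \Q \otimes \cell_\Int^\ring(\ring) \simeq \cell_\Q^\Qring \Qring, \]
stated immediately before the proposition, and the rational Felix--Halperin--Thomas computation for elliptic spaces, which is the $k=\Q$ analogue of the $\Int/p$-formula displayed above the proposition. The latter asserts precisely that $H_{p,*}(\cell_\Q^\Qring \Qring)$ is concentrated in a single homological degree $p_0$ (with $H_{p_0,*}$ recovering a shift of $H^*(\kangr X; \Q)$, though we will not need this explicit identification).

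From these two ingredients the proof is essentially immediate. Since $\Q$ is flat over $\Int$, tensoring with $\Q$ is an exact functor on $\ring$-complexes and therefore commutes with the formation of homology modules. Hence for every pair $(p,q)$ we have a chain of isomorphisms
\[ H_{p,q}(\cell_\Int^\ring \ring) \otimes \Q \;\cong\; H_{p,q}\bigl(\Q \otimes \cell_\Int^\ring \ring\bigr) \;\cong\; H_{p,q}(\cell_\Q^\Qring \Qring), \]
the second isomorphism being exactly the equivalence displayed above. Invoking the rational Felix--Halperin--Thomas result, the right-hand side vanishes unless $p = p_0$, which is the desired conclusion.

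In short, the argument is a two-step reduction: first rationalize using the stated base-change equivalence, then quote the rational version of the structure theorem for $\cell_\Q^\Qring \Qring$. The only point that could be viewed as a genuine obstacle is the base change equivalence itself, but this is treated separately in the paragraph preceding the proposition, so no further work is required here. Were one to unwind that step, the key input would be that the Nullification Construction for $\Q \otimes \ring$ can be obtained by rationalizing the Nullification Construction for $\ring$ (using that $\Q$ is a flat localization and that the injective hulls involved behave well under this localization when applied to the finitely generated modules appearing for an elliptic $X$), and then the rationalization of a $\tc$-equivalence over $\ring$ is a $\tc_\Q$-equivalence over $\Qring$.
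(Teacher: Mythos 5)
Your argument is exactly the one the paper intends: rationalize via the displayed equivalence $\Q \otimes \cell_\Int^\ring(\ring) \simeq \cell_\Q^\Qring \Qring$, use flatness of $\Q$ to commute homology with $\otimes\,\Q$, and quote the rational Felix--Halperin--Thomas concentration result. The paper gives no further detail beyond these two inputs, so your proposal matches its proof essentially verbatim.
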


\begin{example}
Let $X$ be the Stiefel manifold $V_2(\Real^5)$, i.e. the manifold of all orthonormal 2-frames in $\Real^5$. This is a $7$-dimensional Poincar\'{e} duality manifold. We shall first calculate $H_*(\kangr X)$. There is a fibration $S^3 \to X \to S^4$ and it is well known that $\pi_3(X)=\Int/2$. Examining the Serre spectral sequence for the fibration $\kangr X \to \kangr S^4 \to S^3$ we see there is a long exact sequence
\[ \cdots \to H_{*-2}(\kangr X) \xrightarrow{\theta} H_*(\kangr X) \xrightarrow{\psi} H_*(\kangr S^4) \xrightarrow{\varphi} H_{*-1}(\kangr X) \to \cdots\]
Note that $\psi$ is a map of graded rings, $\theta$ is multiplication by the generator of $H_2(\kangr X)\cong \Int/2$ and $\varphi$ is a map of $H_*(\kangr X)$ modules. From these facts it is a simple exercise to show that $H_*(\kangr X)= \Int[u,v]/2u$ with $|u|=2$ and $|v|=6$.

Let $\ring=H_*(\kangr X)$, let $\ideal=(u,v)$ and let $\tc$ be the class of $\ideal$-power torsion modules. we shall now compute $H_*(\cell_\Int^\ring \ring)$. Since $\ring$ is commutative, $H_{-*}(\cell_\Int^\ring \ring) \cong H^*_\ideal(\ring)$, i.e. the $\ideal$-local cohomology groups of $\ring$ (see~\cite{DwyerGreenlees}). The calculation turns out to be a standard exercise in local-cohomology and we shall spare the reader the details. Denote by $\field_2$ the field $\Int/2$, the local cohomology groups are:
\begin{align*}
H^0_\ideal \ring &=0\\
H^1_\ideal \ring &=\Int[v,1/v]/\Int[v] =\shift^{-6}\Hom_\Int(\Int[v],\Q)\\
H^2_\ideal \ring &=\shift^{-8}\Hom_{\field_2}(\field_2[u,v],\field_2)\\
H^n_\ideal \ring &=0 \quad\quad \text{ for } n>2
\end{align*}
We see that the spectral sequence of Theorem~\ref{the: Application to the chains of loops of elliptic} collapses at the $\Espec^2$-page and indeed yields $H^{*+7} (\kangr X;\Int)$.

When working with coefficients in $\Q$ we have that $\Qring=\ring \otimes \Q=\Q[v]$ and the local cohomology groups are:
\begin{align*}
H^0_\ideal \Qring &=0\\
H^1_\ideal \Qring &=\Q[v,1/v]/\Q[v]=\shift^{-6}\Hom_\Q(\Q[v],\Q) \\
H^2_\ideal \Qring&=0 \quad\quad \text{ for } n\neq1
\end{align*}
On the other hand, set $\Sring=H_*(\kangr X;\field_2)$ then $\Sring=\field_2[u,y]$ where $|y|=3$. Let $\jdeal$ be the maximal ideal of $\Sring$, then the $\jdeal$-local cohomology groups are
\begin{align*}
H^2_\jdeal \Sring &=\shift^{-5} \Hom_{\field_2}(\Sring,\field_2)\\
H^n_\jdeal \Sring &=0 \quad\quad \text{ for } n\neq 2
\end{align*}
\end{example}

\section{Application: the target of the Eilenberg-Moore spectral sequence}
\label{sec: Application: the target of the Eilenberg-Moore spectral sequence}

In this section we will work solely over the field of rational number $\Q$. Let $F \to E \to B$ be a fibration sequence where the spaces $E$ and $B$ are connected and of finite type. The Eilenberg-Moore spectral sequence for this  fibration (with coefficients in $\Q$) has the form:
\[\Espec^2_{p,q}=\tor_{p,q}^{H^*(B;\Q)}(H^*(E;\Q),\Q) \ \Rightarrow \ \pi^{p+q}(\chains^*(E;\Q)\otimes_{\chains^*(B;\Q)} \Q)\]

It is standard that the space $F$ is weakly equivalent to a $\kangr B$-space, thus $\chains^*(F;\Q)$ is a $\chains_*(\kangr B;\Q)$-module. Now suppose that $B$ is a finite CW complex. Then the results of~\cite{DwyerGreenleesIyengar} show there is an equivalence of $\chains_*(\kangr B;\Q)$-modules:
\[ \chains^*(E;\Q)\otimes_{\chains^*(B;\Q)} \Q \simeq \cell_\Q^{\chains_*(\kangr B;\Q)} \chains^*(F;\Q)\]

Thus, when $\chains^*(F;\Q)$ is $\Q$-cellular as a $\chains_*(\kangr B;\Q)$-module, the Eilenberg-Moore spectral sequence converges to the rational cohomology of the fiber $F$. Dwyer's strong convergence result~\cite{DwyerStrong} can be viewed in this light. In this section we shall use a colocalization spectral sequence to describe the relation between $\chains^*(F;\Q)$ and the target of the Eilenberg-Moore spectral sequence: $\cell_\Q^{\chains_*(\kangr B;\Q)} \chains^*(F;\Q)$.

\begin{theorem}
\label{the: Application to rational EMSS}
Let $F \to E \to B$ be a fibration sequence where the spaces $E$ and $B$ are connected. Suppose that:
\begin{enumerate}
\item $B$ is a finite CW complex with a finite fundamental group $G$,
\item the universal cover $\tilde{B}$ of $B$ is rationally elliptic,
\item $H_*(F;\Q)$ is finitely generated as a right $H_*(\kangr B;\Q)$-module.
\end{enumerate}
Denote by $\omega$ the idempotent $1-\frac{1}{|G|}\sum_{g\in G} g$ in $H_0(\kangr B;\Q)$ and let $\Sring$ be the graded algebra $\omega H_*(\kangr B;\Q) \omega$. Then there exists a strongly convergent spectral sequence:
\[\Espec^2_{p,q} \ \Rightarrow \ \pi_{p+q}(\chains^*(E;\Q)\otimes_{\chains^*(B;\Q)} \Q)\]
where $\Espec^2_{p,q}$ is given by:
\begin{align*}
\Espec^2_{0,*} &=H^*(F;\Q)^G \\
\Espec^2_{-1,*}&=\ext_{\Sring^\mathrm{op}}^0(H_*(F;\Q)\omega , H^*(\kangr B;\Q) \omega)/\nn\\
\Espec^2_{-p,*}&=\ext_{\Sring^\mathrm{op}}^p(H_*(F;\Q)\omega , H^*(\kangr B;\Q) \omega) & \text{ for } p>1.
\end{align*}
where $\nn=H^*(F;\Q)/H^*(F;\Q)^G$.
\end{theorem}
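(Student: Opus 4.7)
The plan is to apply Theorem~\ref{thm: Third Theorem} to $R = \chains_*(\kangr B;\Q)$ and $M = \chains^*(F;\Q)$, with $k = \Q$ and augmentation $R \to H\Q$ induced by the constant map $\kangr B \to *$. Two hypotheses must be checked. First, $M$ is bounded-above because $H^*(F;\Q)$ is concentrated in non-negative cohomological degrees. Second, $\pi_*R = H_*(\kangr B;\Q)$ must be left Noetherian; for this I would use the fibration $\kangr \tilde B \to \kangr B \to G$ arising from the universal cover $\tilde B \to B$ (valid since $G$ is discrete) to present $H_*(\kangr B;\Q)$ as a rank-$|G|$ finite extension of $H_*(\kangr\tilde B;\Q)$. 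Noetherianity of the latter is the standard rational-homotopy calculation for elliptic spaces recorded in Appendix~\ref{app: The homology of loops on an elliptic space}, and ascends to $H_*(\kangr B;\Q)$.

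Granted these hypotheses, Theorem~\ref{thm: Third Theorem} yields a strongly convergent spectral sequence
\[
\Espec^2_{p,q} = H_{p,q}\bigl(\cell_\Q^{\pi_*R} H^*(F;\Q)\bigr) \;\Rightarrow\; \pi_{p+q}\bigl(\cell_{H\Q}^R \chains^*(F;\Q)\bigr).
\]
The target is identified with $\pi_{p+q}\bigl(\chains^*(E;\Q)\otimes_{\chains^*(B;\Q)} \Q\bigr)$ via the Dwyer--Greenlees--Iyengar equivalence $\cell_\Q^{\chains_*(\kangr B;\Q)}\chains^*(F;\Q)\simeq \chains^*(E;\Q)\otimes_{\chains^*(B;\Q)} \Q$ recalled at the start of the section.

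The substantive work is identifying the $\Espec^2$-page with the stated Ext expressions. Write $\ring = H_*(\kangr B;\Q)$. The orthogonal idempotents $e = \frac{1}{|G|}\sum_g g$ and $\omega = 1-e$ satisfy $\omega \in \ideal$ while $\omega^2 = \omega$, so $\omega$ annihilates every $\ideal$-power torsion left $\ring$-module; combined with the finite generation of $H_*(F;\Q)$ over $\ring$, this identifies the torsion submodule of $H^*(F;\Q)$ with $H^*(F;\Q)^G$ and places it at $\Espec^2_{0,*}$ of the Nullification Construction~\ref{con: Nullification Construction}. The torsion-free quotient is $\nn = H^*(F;\Q)/H^*(F;\Q)^G$. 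For the rest of the nullification tower I would pass to the right-module picture: Noetherianity of the corner algebra $\Sring = \omega\ring\omega$ (inherited from $\ring$), combined with the finite generation of $H_*(F;\Q)$ as a right $\ring$-module, yields a resolution of $H_*(F;\Q)\omega$ by finitely generated projective right $\Sring$-modules. Applying Proposition~\ref{pro: resolution by fg projectives for nullification} with an injective cogenerator $\ee$ for which $\End_\ring(\ee)$ is Morita equivalent to $\Sring^{\mathrm{op}}$ identifies the remainder of the nullification complex with $\mathbf{R}\Hom_{\Sring^{\mathrm{op}}}\bigl(H_*(F;\Q)\omega,\ring\omega\bigr)$. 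Taking cohomology yields $\Espec^2_{-p,*} = \ext_{\Sring^{\mathrm{op}}}^p(H_*(F;\Q)\omega,\ring\omega)$ for $p \geq 2$; the correction $/\nn$ at $p=-1$ records the image of the canonical map $\nn \to \ker(\nn^0 \to \nn^1)$ in the nullification complex.

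The main obstacle I foresee is making the identification of the Nullification Construction with the derived Hom over $\Sring^{\mathrm{op}}$ fully rigorous. Specifically: (i) an injective cogenerator $\ee$ for the $\ideal$-power torsion theory on $\ring$-modules must be constructed so that $\End_\ring(\ee)$ is Morita equivalent to $\Sring^{\mathrm{op}}$; and (ii) the finitely-generated right $\Sring$-module $H_*(F;\Q)\omega$ must admit the required finitely-generated projective resolution. Both steps rely on the Noetherianity of $\ring$ and on the finite-generation hypothesis of the theorem, and the detailed verification belongs most naturally in Appendix~\ref{app: Colocalization in graded rings}.
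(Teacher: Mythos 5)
Your proposal follows essentially the same route as the paper: invoke Theorem~\ref{thm: Third Theorem} for $R=\chains_*(\kangr B;\Q)$ and $M=\chains^*(F;\Q)$, identify the target via the Dwyer--Greenlees--Iyengar equivalence, and compute the $\Espec^2$-page by identifying the nullification with a derived $\Hom$ over the corner algebra $\Sring^{\mathrm{op}}=\End_{\Aring}(\Emodule(W))$ --- exactly the content of Corollary~\ref{cor: Explicit Nullification of bounded grading over Aring} and the surrounding lemmas in Appendix~\ref{app: Colocalization in graded rings}, which you correctly flag as the place where the remaining verification lives. The only caveat is that $\Emodule(W)=(\omega\Aring)^\surd$ cogenerates a torsion theory that coincides with the $\ideal$-power torsion theory only on modules of bounded grading (this is the substance of Proposition~\ref{pro: Nullification of bounded grading over Aring}), a point your appeal to ``an injective cogenerator for the $\ideal$-power torsion theory'' glosses over but which does not affect the argument since all modules in play have bounded grading.
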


A better way, perhaps, to package the same information is the following.
\begin{theorem}
\label{the: Another formulation of Application to rational EMSS}
Let $F \to E \to B$ be a fibration sequence satisfying the conditions of Theorem~\ref{the: Application to rational EMSS}. Denote by $N$ the mapping cone of the map $\chains^*(E;\Q)\otimes_{\chains^*(B;\Q)} \Q \to \chains^*(F;\Q)$. Then there exists a conditionally convergent spectral sequence:
\[\Espec^2_{p,q}= \ext_{\omega H_*(\kangr B;\Q)^\mathrm{op} \omega}^{-p}(H_*(F;\Q)\omega,H^*(\kangr B;\Q) \omega)\ing{q}  \ \Rightarrow \ \pi_{p+q}N\]
where $\omega$ is as described in Theorem~\ref{the: Application to rational EMSS}.
\end{theorem}

\begin{remark}
The reader might wonder why we are using $\Sring^\mathrm{op}$ in the theorem above instead of $\Sring$. Indeed, one can replace $\Sring^\mathrm{op}$ with $\Sring$ and change the resulting right $\Sring$-modules to left modules. The point is that at the beginning of this section we have designated $\chains^*(F;\Q)$ to be a \emph{left} $\chains_*(\kangr B;\Q)$-module, which means that $H_*(F;\Q)$ is a \emph{right} $H_*(\kangr B;\Q)$-module. So the statement of Theorem~\ref{the: Application to rational EMSS} is consistent with our choices.
\end{remark}

We shall only prove Theorem~\ref{the: Application to rational EMSS}, since Theorem~\ref{the: Another formulation of Application to rational EMSS} is essentially the same.

\begin{proof}[Proof of Theorem~\ref{the: Application to rational EMSS}]
Theorem~\ref{thm: Third Theorem} implies there is a strongly convergent spectral sequence
\[ \Espec^2_{p,q}= H_{p,q}(\cell_{\Q}^{H_*(\kangr B;\Q)} H^*(F;\Q)) \ \Rightarrow \ \pi_{p+q}(\cell_\Q^{\chains_*(\kangr B;\Q)} \chains^*(F;\Q))\]
As noted above (see also \cite[Lemma 5.6]{ShamirEMSS}): $\chains^*(E;\Q)\otimes_{\chains^*(B;\Q)} \Q \simeq \cell_\Q^{\chains_*(\kangr B;\Q)} \chains^*(F;\Q)$. Thus, it remains only to describe the $\Espec^2$-page of this spectral sequence. This calculation is done in~\cite{ShamirGradedColocalization}, where it is shown that
\[ \Null_\Q^{H_*(\kangr B;\Q)} H^*(F;\Q) \simeq \Hom_{\Sring^\mathrm{op}}(H_*(F;\Q)\omega, H^*(\kangr B;\Q)\omega)\]
The triangle $\cell_\Q H^*(F;\Q) \to H^*(F;\Q) \to \Null_\Q H^*(F;\Q)$ yields the $\Espec^2$-page described in Theorem~\ref{the: Application to rational EMSS}.
\end{proof}

\begin{remark}
It is easy to see that when $\pi_1(B)=\Int/2$ then
\[ \Sring =\omega H_*(\kangr B;\Q)\omega \cong H_*(\kangr \tilde{B};\Q)^{\Int/2} \quad \text{ and } \quad H_*(\kangr B;\Q)\omega \cong H_*(\kangr \tilde{B};\Q)\]
\end{remark}

\begin{example}
It is amusing to apply Theorem~\ref{the: Application to rational EMSS} to the principal fibration
\[ \Int/2 \to S^{2n} \to \Real P^{2n}\]
The algebra $H_*(\kangr S^{2n};\Q)$ is $\Q[x]$ with $|x|=2n-1$ (note that this is not graded-commutative, it is simply the tensor algebra on a vector space of dimension $1$ and degree $2n-1$). Hence $H_*(\kangr \Real P^{2n};\Q)$ is the semi-direct product $\Q[x]\rtimes \Int/2$. The action of $\Int/2$ on $\pi_{2n}(S^{2n})=\Int$ is the non-trivial action. So, if we denote by $g$ is the generator of $\Int/2$, then $gxg^{-1}=gxg=-x$. Thus:
\begin{align*}
\omega H_*(\kangr \Real P^{2n};\Q) \omega &\cong H_*(\kangr S^{2n};\Q)^{\Int/2} \cong \Q[x^2]\\
\omega H_*(\kangr \Real P^{2n};\Q) &\cong \Q[x]\\
\Q[\Int/2]\omega &\cong \Q
\end{align*}
Note that the isomorphism $\Q[\Int/2]\omega \cong \Q$ is not an isomorphism of $\Q[\Int/2]$-modules, since $\Q[\Int/2]\omega$ has a non-trivial $\Int/2$ action. Nevertheless, this is sufficient for us to compute the $\Espec^2$-page of the spectral sequence of Theorem~\ref{the: Application to rational EMSS}:
\[ \Espec^2_{p,q}=\left\{
                    \begin{array}{ll}
                      \Q, & \hbox{$(p,q)=(0,0)$;} \\
                      \Q, & \hbox{$(p,q)=(-1,2n-1)$;} \\
                      0, & \hbox{otherwise.}
                    \end{array}
                  \right.\]
The spectral sequence then collapses in the $\Espec^2$-page to reveal that
\[ \pi_n(\chains^*(S^{2n};\Q)\otimes_{\chains^*(\Real P^{2n};\Q)}\Q) \cong \left\{
                                                                             \begin{array}{ll}
                                                                               \Q, & \hbox{$n=0$ or $n=-2n$;} \\
                                                                               0, & \hbox{otherwise.}
                                                                             \end{array}
                                                                           \right.\]
On the other hand, $\chains^*(\Real P^{2n};\Q) \simeq \Q$ and hence $\chains^*(S^{2n};\Q)\otimes_{\chains^*(\Real P^{2n};\Q)}\Q \simeq \chains^*(S^{2n};\Q)$. So we have just calculated the rational cohomology of $S^{2n}$.
\end{example}

\bibliographystyle{amsplain}      
\bibliography{bib2007}          

\end{document}